\providecommand{\U}[1]{\protect\rule{.1in}{.1in}}
\newtheorem{theorem}{Theorem}
\newtheorem{theorem*}{Example}
\newtheorem{conjecture}[theorem]{Conjecture}
\newtheorem{corollary}[theorem]{Corollary}
\newtheorem{definition}[theorem]{Definition}
\newtheorem{example}[theorem*]{Example}
\newtheorem{lemma}[theorem]{Lemma}
\newtheorem{proposition}[theorem]{Proposition}
\newtheorem{remark}[theorem]{Observation}
\newenvironment{proof}[1][Proof]{\noindent\textbf{#1.} }{\ \hfill \rule{0.5em}{0.5em}\bigskip}
\begin{document}

\title{Vertex and edge metric dimensions of cacti}
\author{Jelena Sedlar$^{1,3}$,\\Riste \v Skrekovski$^{2,3}$ \\[0.3cm] {\small $^{1}$ \textit{University of Split, Faculty of civil
engineering, architecture and geodesy, Croatia}}\\[0.1cm] {\small $^{2}$ \textit{University of Ljubljana, FMF, 1000 Ljubljana,
Slovenia }}\\[0.1cm] {\small $^{3}$ \textit{Faculty of Information Studies, 8000 Novo
Mesto, Slovenia }}\\[0.1cm] }
\maketitle

\begin{abstract}
In a graph $G,$ a vertex (resp. an edge) metric generator is a set of vertices
$S$ such that any pair of vertices (resp. edges) from $G$ is distinguished by
at least one vertex from $S.$ The cardinality of a smallest vertex (resp.
edge) metric generator is the vertex (resp. edge) metric dimension of $G.$ In
\cite{SedSkreUnicyclic} we determined the vertex (resp. edge) metric dimension
of unicyclic graphs and that it takes its value from two consecutive integers.
Therein, several cycle configurations were introduced and the vertex (resp.
edge) metric dimension takes the greater of the two consecutive values only if
any of these configurations is present in the graph. In this paper we extend
the result to cactus graphs i.e. graphs in which all cycles are pairwise edge
disjoint. We do so by defining a unicyclic subgraph of $G$ for every cycle of
$G$ and applying the already introduced approach for unicyclic graphs which
involves the configurations. The obtained results enable us to prove the cycle
rank conjecture for cacti. They also yield a simple upper bound on metric
dimensions of cactus graphs and we conclude the paper by conjecturing that the
same upper bound holds in general.

\end{abstract}

\textit{Keywords:} vertex metric dimension; edge metric dimension; cactus
graphs, zero forcing number, cycle rank conjecture.

\textit{AMS Subject Classification numbers:} 05C12; 05C76

\section{Introduction}

The concept of metric dimension was first studied in the context of navigation
system in various graphical networks \cite{HararyVertex}. There the robot
moves from one vertex of the network to another, and some of the vertices are
considered to be a landmark which helps a robot to establish its position in a
network. Then the problem of establishing the smallest set of landmarks in a
network becomes a problem of determining a smallest metric generator in a
graph \cite{KhullerVertex}.

Another interesting application is in chemistry where the structure of a
chemical compound is frequently viewed as a set of functional groups arrayed
on a substructure. This can be modeled as a labeled graph where the vertex and
edge labels specify the atom and bond types, respectively, and the functional
groups and substructure are simply subgraphs of the labeled graph
representation. Determining the pharmacological activities related to the
feature of compounds relies on the investigation of the same functional groups
for two different compounds at the same point \cite{ChartrandVertex}. Various
other aspects of the notion were studied \cite{BuczkowskiVertex, FehrVertex,
KleinVertex, MelterVertex} and a lot of research was dedicated to the
behaviour of metric dimension with respect to various graph operations
\cite{CaceresVertex, ChartrandVertex, SaputroVertex, YeroCoronaVertex}.

In this paper, we consider only simple and connected graphs. By $d(u,v)$ we
denote the distance between a pair of vertices $u$ and $v$ in a graph $G$. A
vertex $s$ from $G$ \emph{distinguishes} or \emph{resolves} a pair of vertices
$u$ and $v$ from $G$ if $d(s,u)\not =d(s,v).$ We say that a set of vertices
$S\subseteq V(G)$ is a \emph{vertex metric generator,} if every pair of
vertices in $G$ is distinguished by at least one vertex from $S.$ The
\emph{vertex metric dimension} of $G,$ denoted by $\mathrm{dim}(G),$ is the
cardinality of a smallest vertex generator in $G$. This variant of metric
dimension, as it was introduced first, is sometimes called only metric
dimension and the prefix "vertex" is omitted.

In \cite{TratnikEdge} it was noticed that there are graphs in which none of
the smallest metric generators distinguishes all pairs of edges, so this was
the motivation to introduce the notion of the edge metric generator and
dimension, particularly to study the relation between $\mathrm{dim}(G)$ and
$\mathrm{edim}(G).$

The distance $d(u,vw)$ between a vertex $u$ and an edge $vw$ in a graph $G$ is
defined by $d(u,vw)=\min\{d(u,v),d(u,w)\}.$ Recently, two more variants of
metric dimension were introduced, namely the edge metric dimension and the
mixed metric dimension of a graph $G.$ Similarly as above, a vertex $s\in
V(G)$ \emph{distinguishes} two edges $e,f\in E(G)$ if $d(s,e)\neq d(s,f).$ So,
a set $S\subseteq V(G)$ is an \emph{edge metric generator} if every pair of
vertices is distinguished by at least one vertex from $S,$ and the cardinality
of a smallest such set is called the \emph{edge metric dimension} and denoted
by $\mathrm{edim}(G).$ Finally, a set $S\subseteq V(G)$ is a \emph{mixed
metric generator} if it distinguishes all pairs from $V(G)\cup E(G),$ and the
\emph{mixed metric dimension}, denoted by $\mathrm{mdim}(G)$, is defined as
the cardinality of a smallest such set in $G$.

This new variant also attracted a lot of attention \cite{GenesonEdge,
HuangApproximationEdge, PeterinEdge, ZhangGaoEdge, ZhuEdge, ZubrilinaEdge},
with one particular direction of research being the study of unicyclic graphs
and the relation of the two dimensions on them \cite{Knor, SedSkreBounds,
SedSkreUnicyclic}. The mixed metric dimension is then a natural next step, as
it unifies these two concepts. It was introduced in \cite{KelencMixed} and
further studied in \cite{SedSkrekMixed, SedSkreTheta}. A wider and systematic
introduction to these three variants of metric dimension can be found in
\cite{KelPhD}.

In this paper we establish the vertex and the edge metric dimension of cactus
graph, using the approach from \cite{SedSkreUnicyclic} where the two
dimensions were established for unicyclic graphs. The extension is not
straightforward, as in cactus graphs a problem with indistinguishable pairs of
edges and vertices may arise from connecting two cycles, so additional
condition will have to be introduced.

\section{Preliminaries}

A \emph{cactus} graph is any graph in which all cycles are pairwise edge
disjoint. Let $G$ be a cactus graph with cycles $C_{1},\ldots,C_{c}$ and let
$g_{i}$ denote the length of a cycle $C_{i}$ in $G.$ For a vertex $v$ of a
cycle $C_{i},$ denote by $T_{v}(C_{i})$ the connected component of
$G-E(C_{i})$ which contains $v.$ If $G$ is a unicyclic graph, then
$T_{v}(C_{i})$ is a tree, otherwise $T_{v}(C_{i})$ may contain a cycle. When
no confusion arises from that, we will use the abbreviated notation $T_{v}.$ A
\emph{thread} hanging at a vertex $v\in V(G)$ of degree $\geq3$ is any path
$u_{1}u_{2}\cdots u_{k}$ such that $u_{1}$ is a leaf, $u_{2},\ldots,u_{k}$ are
of degree $2,$ and $u_{k}$ is connected to $v$ by an edge. The number of
threads hanging at $v$ is denoted by $\ell(v).$

We say that a vertex $v\in V(C_{i})$ is \emph{branch-active} if $\deg(v)\geq4$
or $T_{v}$ contains a vertex of degree $\geq3$ distinct from $v$. We denote
the number of branch-active vertices on $C_{i}$ by $b(C_{i}).$ If a vertex $v$
from a cycle $C_{i}$ is branch-active, then $T_{v}$ contains both a pair of
vertices and a pair of edges which are not distinguished by any vertex outside
$T_{v}$, see Figure \ref{Fig_branching}.

\begin{figure}[h]
\begin{center}
\includegraphics[scale=0.9]{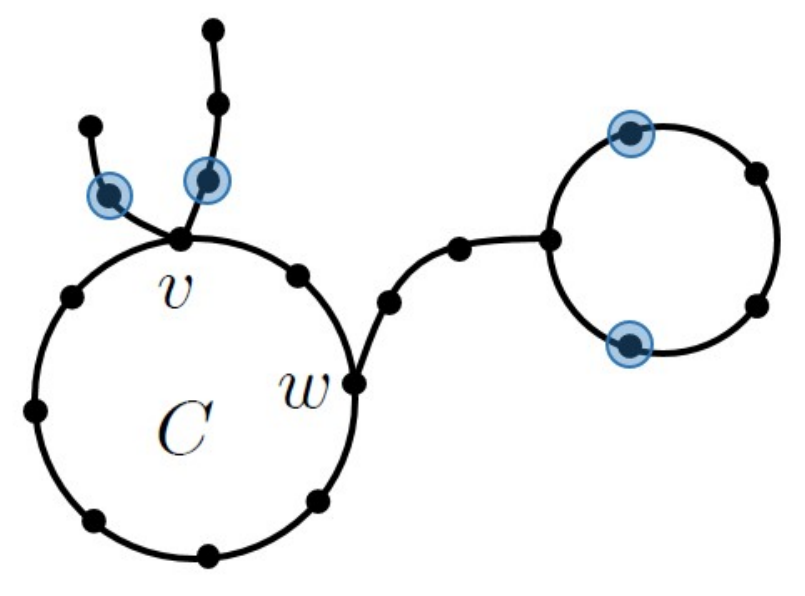}
\end{center}
\caption{A cactus graph with two cycles. On the cycle $C$ vertices $v$ and $w$
are branch-active, and a pair of vertices is marked in $T_{v}$ and $T_{w}$
which is not distinguished by any vertex outside $T_{v}$ and $T_{w}$,
respectively.}%
\label{Fig_branching}%
\end{figure}

Now, we will introduce a property called "branch-resolving" which a set of
vertices $S\subseteq V(G)$ must possess in order to avoid this problem of non
distinguished vertices (resp. edges) due to branching. First, a thread hanging
at a vertex $v$ of degree $\geq3$ is $S$\emph{-free} if it does not contain a
vertex from $S.$ Now, a set of vertices $S\subseteq V(G)$ is
\emph{branch-resolving} if at most one $S$-free thread is hanging at every
vertex $v\in V(G)$ of degree $\geq3$. Therefore, for every branch-resolving
set $S$ it holds that $\left\vert S\right\vert \geq L(G)$ where
\[
L(G)=\sum_{v\in V(G),\ell(v)>1}(\ell(v)-1).
\]
It is known in literature \cite{TratnikEdge, KhullerVertex} that for a tree
$T$ it holds that $\mathrm{dim}(G)=\mathrm{edim}(G)=L(G).$

Also, given a set of vertices $S\subseteq V(G),$ we say that a vertex $v\in
V(C_{i})$ is $S$\emph{-active} if $T_{v}$ contains a vertex from $S.$ The
number of $S$-active vertices on a cycle $C_{i}$ is denoted by $a_{S}(C_{i}).$
If $a_{S}(C_{i})\geq2$ for every cycle $C_{i}$ in $G,$ then we say the set $S$
is \emph{biactive}. For a biactive branch-resolving set $S$ the following
holds: if a vertex $v$ from a cycle $C_{i}$ is branch-active, then $T_{v}$
contains a vertex with two threads hanging at it or $T_{v}$ contains a cycle,
either way $T_{v}$ contains a vertex from $S,$ so $v$ is $S$-active.
Therefore, for a biactive branch-resolving set $S$ we have $a_{S}(C_{i})\geq
b(C_{i})$ for every $i$.

\begin{lemma}
\label{Lemma_biactive_branchResolving}Let $G$ be a cactus graph and let
$S\subseteq V(G)$ be a set of vertices in $G.$ If $S$ is a vertex (resp. an
edge) metric generator, then $S$ is a biactive branch-resolving set.
\end{lemma}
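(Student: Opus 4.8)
The plan is to prove the contrapositive in each direction: if $S$ fails to be branch-resolving, or fails to be biactive, then $S$ cannot distinguish some pair of vertices (resp. edges). These are two independent obstructions, so I would treat them as two separate cases.

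Let me think about the branch-resolving part first. Suppose $S$ is not branch-resolving. Then there is a vertex $v$ of degree $\geq 3$ with at least two $S$-free threads hanging at it, say threads $P$ and $Q$. Intuitively, from the perspective of any vertex of $S$, these two threads look identical because every shortest path from an $S$-vertex to a vertex (or edge) on one of these threads must pass through $v$. Concretely, let $P = p_1 p_2 \cdots p_k$ and $Q = q_1 q_2 \cdots q_m$ with $p_k, q_m$ adjacent to $v$. For any $s \in S$, since $s \notin P \cup Q$, we have $d(s, p_i) = d(s,v) + (k - i + 1)$ and similarly $d(s, q_j) = d(s,v) + (m - j + 1)$. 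So the vertex $p_i$ on $P$ at distance $t$ from $v$ and the vertex $q_j$ on $Q$ at distance $t$ from $v$ satisfy $d(s, p_i) = d(s, q_j)$ for every $s \in S$, provided both threads are long enough to contain a vertex at distance $t$. Taking $t = 1$ (the vertices adjacent to $v$ on each thread) always works, so such an indistinguishable pair of vertices exists. The edge case is analogous, comparing the edges $p_k v$ and $q_m v$ incident to $v$, or more robustly a pair of edges at equal distance along the two threads.

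For the biactive part, suppose some cycle $C_i$ has $a_S(C_i) \leq 1$, i.e.\ at most one vertex $v \in V(C_i)$ has $T_v$ containing a vertex of $S$. Then every vertex of $S$ lies in a single tree $T_v$ hanging off $C_i$ (or $S$ lies entirely outside all of $C_i$'s branches if $a_S(C_i)=0$). The key geometric fact is that from a single ``entry point'' $v$ into the cycle, the cycle exhibits a symmetry: the two arcs of $C_i$ emanating from $v$ are traversed by shortest paths in a mirror-symmetric way, so a pair of antipodal-type vertices (resp. edges) on the two arcs equidistant from $v$ cannot be distinguished by any $s \in S$. More precisely, for $s \in T_v$ and any vertex $u \in V(C_i)$, the shortest path from $s$ to $u$ enters the cycle at $v$, so $d(s,u) = d(s,v) + d_{C_i}(v,u)$ where $d_{C_i}$ is distance along the cycle. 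Since the cycle has two vertices (or two edges, depending on parity of $g_i$) at equal cycle-distance from $v$ on the two arcs, these are not distinguished by $s$; as this holds for all $s\in S$, the set $S$ fails. One should handle the vertex/edge and odd/even parity subcases, but in each there is such an equidistant pair.

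The main obstacle I anticipate is the biactive step, specifically verifying the distance formula $d(s,u) = d(s,v) + d_{C_i}(v,u)$ when $T_v$ itself may contain cycles (since $G$ is a cactus, not merely unicyclic). I would argue that because all cycles are edge-disjoint, any shortest path from $s \in T_v$ to a vertex of $C_i$ must leave $T_v$ through $v$ — the only connection between $T_v$ and the rest of the cycle is the vertex $v$ — so the additive formula still holds regardless of the internal structure of $T_v$; the distance $d(s,v)$ absorbs all complications inside $T_v$. Once this is established, the symmetry argument on $C_i$ produces the equidistant indistinguishable pair exactly as in the unicyclic case. I would also need to check the degenerate case $a_S(C_i)=0$ separately, but that is only easier since then all of $S$ lies in the common component attached to $C_i$ and the same entry-point argument applies.
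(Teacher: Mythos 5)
Your proposal is correct and takes essentially the same approach as the paper's proof: argue by contradiction, exhibit the two neighbors of $v$ on the two $S$-free threads (resp.\ the two edges joining them to $v$) when $S$ is not branch-resolving, and exhibit the two cycle-neighbors of the unique $S$-active vertex $v$ (resp.\ the two incident cycle edges) when $S$ is not biactive; your check that every shortest path leaving $T_v$ must pass through $v$ is precisely the justification the paper leaves implicit. One minor slip worth noting: since the components $T_v$, $v\in V(C_i)$, of $G-E(C_i)$ cover all of $V(G)$, the case $a_S(C_i)=0$ forces $S=\emptyset$ (there is no ``common component'' lying outside the branches of $C_i$), and the paper disposes of this degenerate case simply by observing that the empty set is never a vertex or edge metric generator unless $G=K_2$, which is a tree.
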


\begin{proof}
Suppose to the contrary that a vertex (resp. an edge) metric generator $S$ is
not a biactive branch-resolving set. If $S$ is not branch-resolving, then
there exists a vertex $v$ of degree $\geq3$ and two threads hanging at $v$
which do not contain a vertex from $S.$ Let $v_{1}$ and $v_{2}$ be two
neighbors of $v,$ each belonging to one of these two threads. Then $v_{1}$ and
$v_{2}$ (resp. $v_{1}v$ and $v_{2}v$) are not distinguished by $S,$ a contradiction.

Assume now that $S$ is not biactive. We may assume that $G$ has at least one
cycle, otherwise $G$ is a tree and there is nothing to prove. Notice that an
empty set $S$ cannot be either a vertex or an edge metric generator in a
cactus graph unless $G=K_{2}$ but then it is a tree. Therefore, if $S$ is not
biactive, there must exist a cycle $C_{i}$ with precisely one $S$-active
vertex $v$ and let $v_{1}$ and $v_{2}$ be the two neighbors of $v$ on $C_{i}.$
Then $v_{1}$ and $v_{2}$ (resp. $v_{1}v$ and $v_{2}v$) are not distinguished
by $S,$ a contradiction.
\end{proof}

The above lemma gives us a necessary condition for $S$ to be a vertex (resp.
an edge) metric generator in a cactus graph. In \cite{SedSkreUnicyclic}, a
more elaborate condition for unicyclic graphs was established, which is both
necessary and sufficient. In this paper we will extend that condition to
cactus graphs, but to do so we first need to introduce the following
definitions from \cite{SedSkreUnicyclic}. Let $C_{i}$ be a cycle in a cactus
graph $G$ and let $v_{i},$ $v_{j}$ and $v_{k}$ be three vertices of $C_{i},$
we say that $v_{i},$ $v_{j}$ and $v_{k}$ are a \emph{geodesic triple} on
$C_{i}$ if
\[
d(v_{i},v_{j})+d(v_{j},v_{k})+d(v_{i},v_{k})=|V(C_{i})|.
\]
It was shown in \cite{SedSkreBounds} that a biactive branch-resolving set with
a geodesic triple of $S$-active vertices on every cycle is both a vertex and
an edge metric generator. This result is useful for bounding the dimensions
from above. Also, we need the definition of the five graph configurations from
\cite{SedSkreUnicyclic}.

\begin{figure}[ph]
\begin{center}
$%
\begin{array}
[c]{ll}%
\text{a) \raisebox{-1\height}{\includegraphics[scale=0.8]{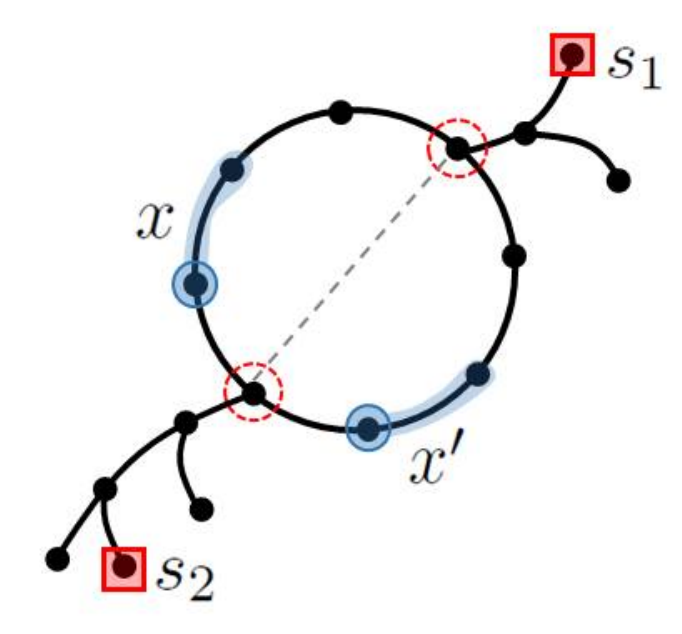}}} &
\text{b) \raisebox{-1\height}{\includegraphics[scale=0.8]{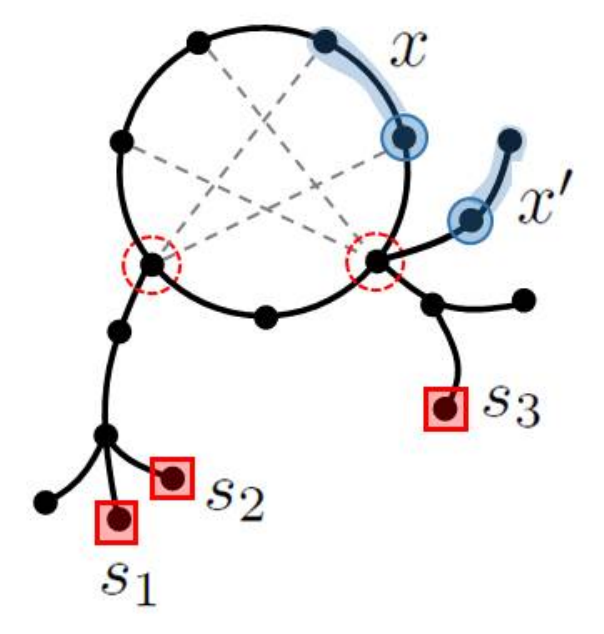}}}\\
\text{c) \raisebox{-1\height}{\includegraphics[scale=0.8]{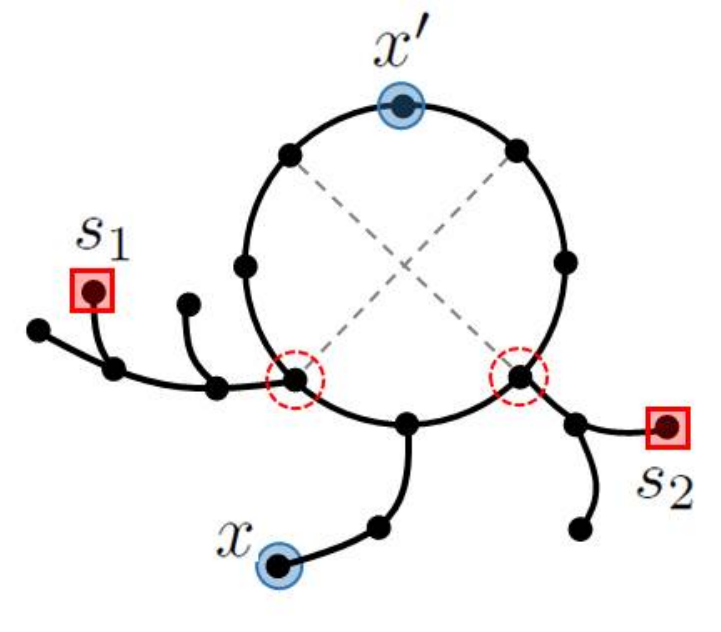}}} &
\text{d) \raisebox{-1\height}{\includegraphics[scale=0.8]{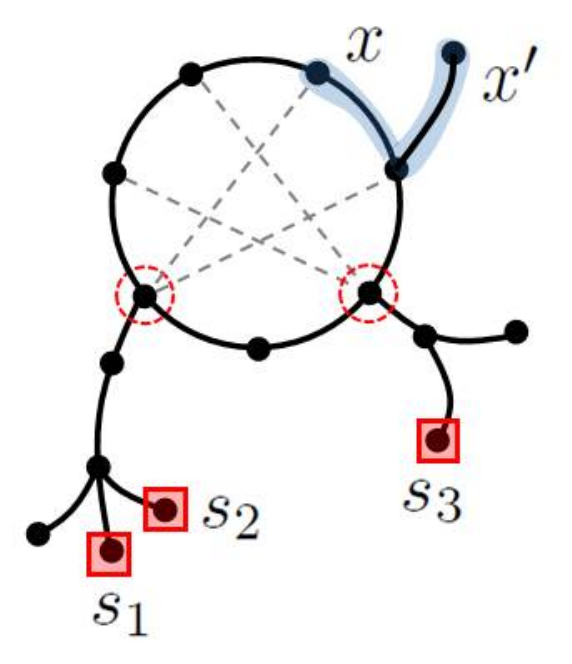}}}\\
\text{e) \raisebox{-1\height}{\includegraphics[scale=0.8]{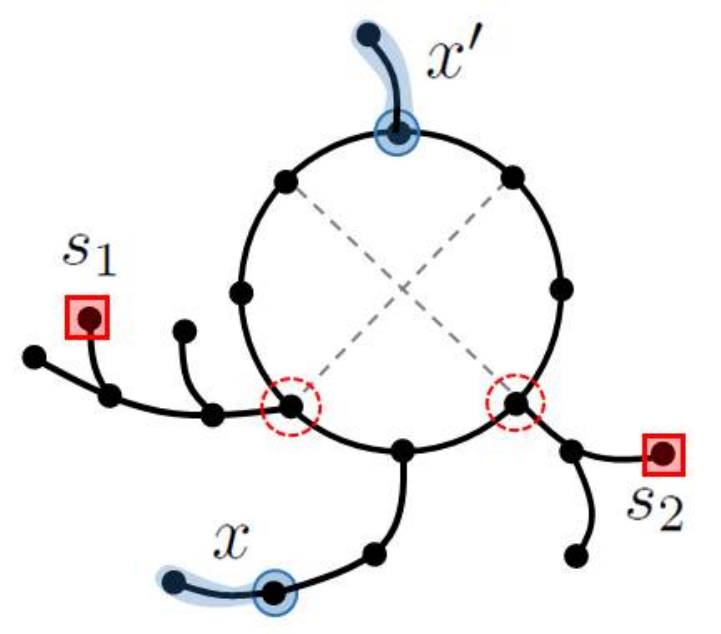}}} &
\text{f) \raisebox{-1\height}{\includegraphics[scale=0.8]{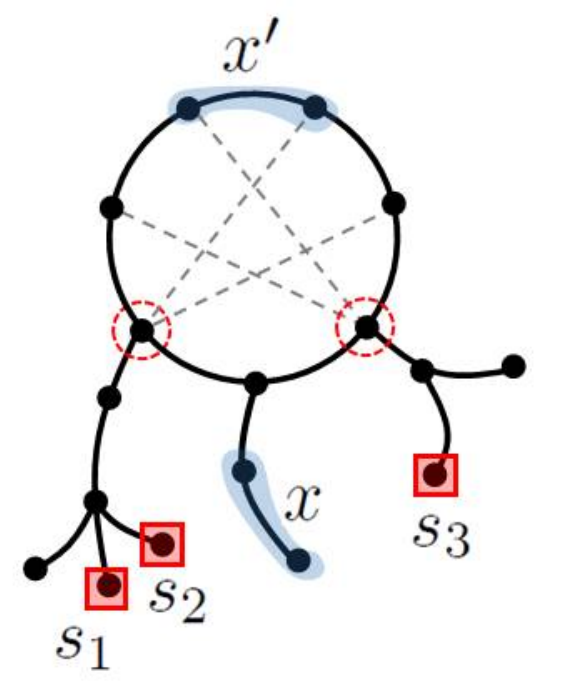}}}%
\end{array}
$
\end{center}
\caption{All six graphs shown in this figure are unicyclic graphs with a
biactive branch-resolving set $S\ $comprised of vertices $s_{i}$. Vertices on
a cycle which are $S$-active are marked by a dashed circle and connected to
its antipodal vertices by a dashed line. Each graph in this figure contains at
least one of the five configurations, namely: a) $\mathcal{A}$, b)
$\mathcal{B}$ and also $\mathcal{D},$ c) $\mathcal{C}$, d) $\mathcal{D},$ e)
$\mathcal{E}$ on even cycle and also $\mathcal{C}$, f) $\mathcal{E}$ on odd
cycle. A pair $x$ and $x^{\prime}$ of undistinguished vertices and/or edges is
highlighted in each of the graphs. Notice that the graph in b) which contains
$\mathcal{B}$ consequently also contains $\mathcal{D},$ but graph in d) which
contains $\mathcal{D}$ does not contain $\mathcal{B}.$ Similarly, the graph in
e) which contains $\mathcal{E}$ also contains $\mathcal{C},$ but the graph
from c) which contains $\mathcal{C}$ does not contain $\mathcal{E}$.}%
\label{Fig_configurations}%
\end{figure}

\begin{definition}
Let $G$ be a cactus graph, $C$ a cycle in $G$ of the length $g$, and $S$ a
biactive branch-resolving set in $G$. We say that $C=v_{0}v_{1}\cdots v_{g-1}$
is \emph{canonically labeled} with respect to $S$ if $v_{0}$ is $S$-active and
$k=\max\{i:v_{i}$ is $S$-active$\}$ is as small as possible.
\end{definition}

Let us now introduce five configurations which a cactus graph can contain with
respect to a biactive branch-resolving set $S.$ All these configurations are
illustrated by Figure \ref{Fig_configurations}.\ 

\begin{definition}
Let $G$ be a cactus graph, $C$ a canonically labeled cycle in $G$ of the
length $g$, and $S$ a biactive branch-resolving set in $G$. We say that the
cycle $C$ \emph{with respect} to $S$ \emph{contains} configurations:

\begin{description}
\item {$\mathcal{A}$}. If $a_{S}(C)=2$, $g$ is even, and $k=g/2$;

\item {$\mathcal{B}$}. If $k\leq\left\lfloor g/2\right\rfloor -1$ and there is
an $S$-free thread hanging at a vertex $v_{i}$ for some $i\in\lbrack
k,\left\lfloor g/2\right\rfloor -1]\cup\lbrack\left\lceil g/2\right\rceil
+k+1,g-1]\cup\{0\}$;

\item {$\mathcal{C}$}. If $a_{S}(C)=2$, $g$ is even, $k\leq g/2$ and there is
an $S$-free thread of the length $\geq g/2-k$ hanging at a vertex $v_{i}$ for
some $i\in\lbrack0,k]$;

\item $\mathcal{D}$. If $k\leq\left\lceil g/2\right\rceil -1$ and there is an
$S$-free thread hanging at a vertex $v_{i}$ for some $i\in\lbrack
k,\left\lceil g/2\right\rceil -1]\cup\lbrack\left\lfloor g/2\right\rfloor
+k+1,g-1]\cup\{0\}$;

\item {$\mathcal{E}$}. If $a_{S}(C)=2$ and there is an $S$-free thread of the
length $\geq\left\lfloor g/2\right\rfloor -k+1$ hanging at a vertex $v_{i}$
with $i\in\lbrack0,k].$ Moreover, if $g$ is even, an $S$-free thread must be
hanging at the vertex $v_{j}$ with $j=g/2+k-i$.
\end{description}
\end{definition}

\begin{figure}[h]
\begin{center}
\includegraphics[scale=0.9]{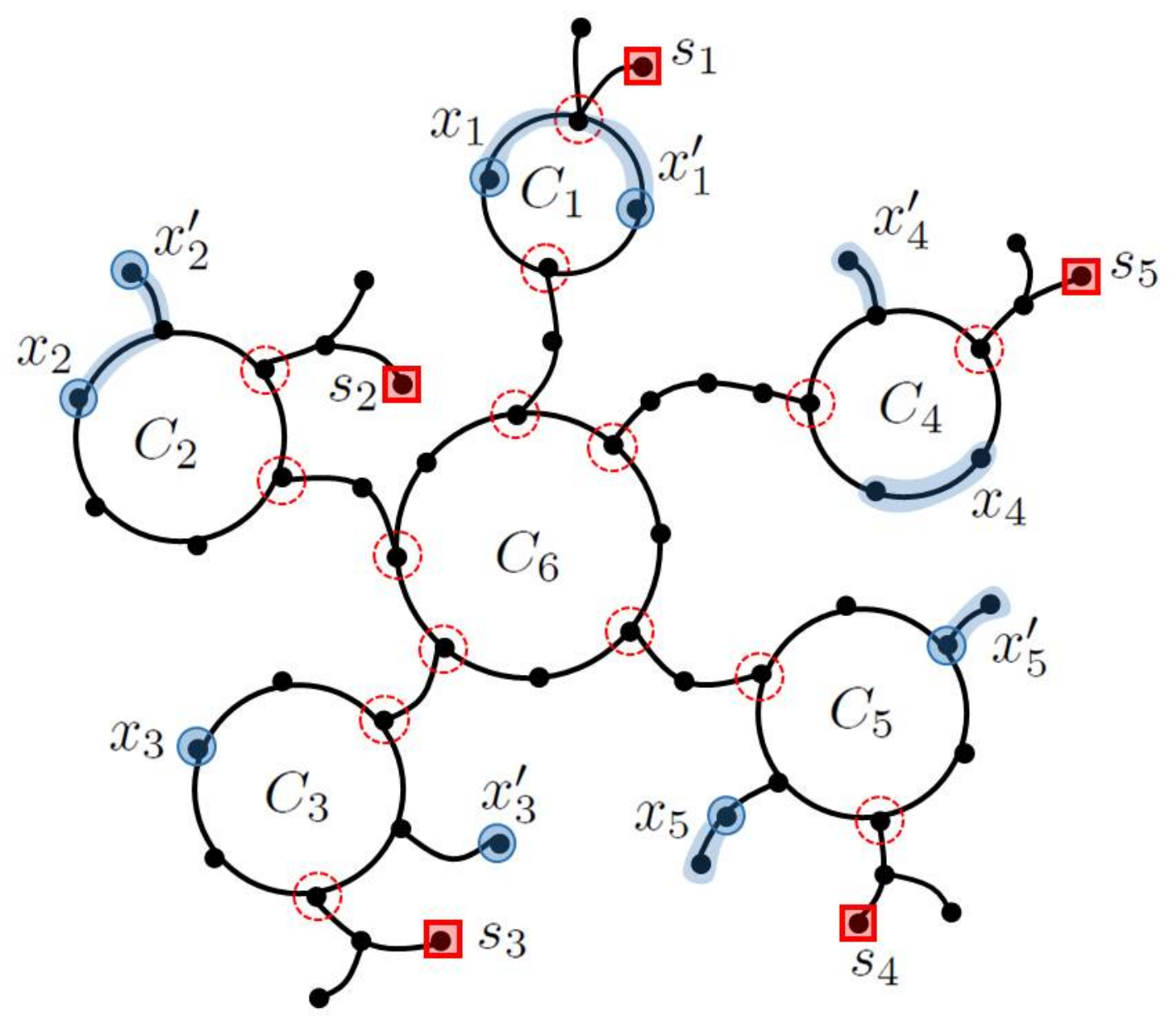}
\end{center}
\caption{A cactus graph $G$ from Example \ref{Example_conf}. A smallest
biactive branch-resolving set $S=\{s_{1},s_{2},s_{3},s_{4},s_{5}\}$ is marked
in the figure by squares. Vertices on the cycles which are $S$-active are
marked by a dashed circle. Since every cycle contains a configuration with
respect to $S,$ for each cycle $C_{i}$ there is a pair of vertices and/or
edges $x_{i}$ and $x_{i}^{\prime}$ which are not distinguished by $S,$ these
pairs are also highlighted in the figure.}%
\label{Fig_configExample}%
\end{figure}

Notice that only an even cycle can contain configuration $\mathcal{A}$ or
$\mathcal{C}$. Also, configurations $\mathcal{B}$ and $\mathcal{D}$ are almost
the same, they differ only if $C$ is odd where the index $i$ can take two more
values in $\mathcal{D}$ than in $\mathcal{B}.$ Finally, for configurations
$\mathcal{A}$, $\mathcal{C}$, and $\mathcal{E}$ it holds that $a_{S}(C)=2,$ so
there are only two $S$-active vertices on the cycle $C$ and hence no geodesic
triple of $S$-active vertices. On the other hand, for configurations
$\mathcal{B}$ and $\mathcal{D}$ there might be more than two $S$-active
vertices on the cycle $C,$ but the bounds $k\leq\left\lfloor g/2\right\rfloor
-1$ and $k\leq\left\lceil g/2\right\rceil -1$ again imply there is no geodesic
triple of $S$-active vertices on $C.$ Therefore, we can state the following
observation which is useful for constructing metric generators.

\begin{remark}
\label{Obs_geodTriple}If there is a geodesic triple of $S$-active vertices on
a cycle $C$ of a cactus graph $G,$ then $C$ does not contain any of the
configurations $\mathcal{A}$, $\mathcal{B}$, $\mathcal{C}$, $\mathcal{D}$, and
$\mathcal{E}$ with respect to $S.$
\end{remark}

The following result regarding configurations $\mathcal{A}$, $\mathcal{B}$,
$\mathcal{C}$, $\mathcal{D}$, and $\mathcal{E}$ was established for unicyclic
graphs (see Lemmas 6, 7, 13 and 14 from \cite{SedSkreUnicyclic}).

\begin{theorem}
\label{Lemma_configurations}Let $G$ be a unicyclic graph with the cycle $C$
and let $S$ be a biactive branch-resolving set in $G$. The set $S$ is a vertex
(resp. an edge) metric generator if and only if $C$ does not contain any of
configurations $\mathcal{A}$, $\mathcal{B}$, and $\mathcal{C}$ (resp.
$\mathcal{A}$, $\mathcal{D}$, and $\mathcal{E}$) with respect to $S.$
\end{theorem}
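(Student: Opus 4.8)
The plan is to establish both biconditionals by reducing to the pairs whose resolution genuinely depends on the cyclic geometry of $C$, and then to run a distance analysis anchored at the canonical labeling $C=v_0v_1\cdots v_{g-1}$. Since $G$ is unicyclic, every vertex and every edge lies either on $C$ or inside some tree $T_{v_i}$ attached to $C$ at the single vertex $v_i$. Using that $S$ is biactive branch-resolving (Lemma \ref{Lemma_biactive_branchResolving} shows this is forced), every pair that is already separated inside one tree or by the branching structure is distinguished; what can remain unresolved is precisely a pair that is symmetric with respect to all the $S$-active vertices, together with pairs created by $S$-free threads. These are exactly the situations described by the configurations, each of which is phrased in terms of an $S$-free thread and the indices $k$ and $g/2$.

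The computational backbone is the following distance identity. Each $s\in S$ lies in some $T_{v_j}$ with $v_j$ $S$-active, and since $T_{v_j}$ meets $C$ only at $v_j$, for a cycle vertex $v_a$ we have $d(s,v_a)=d(s,v_j)+d_C(v_j,v_a)$ with $d_C(v_j,v_a)=\min(|a-j|,\,g-|a-j|)$, while for a vertex $u$ at depth $t$ on a thread hanging at $v_i$ we have $d(s,u)=d(s,v_j)+d_C(v_j,v_i)+t$. Hence an $S$-active vertex $v_j$ separates two cycle vertices $v_a,v_b$ if and only if $d_C(v_j,v_a)\neq d_C(v_j,v_b)$, equivalently $a+b\not\equiv 2j\pmod g$; and it fails to separate a thread vertex of depth $t$ at $v_i$ from a cycle vertex $v_a$ exactly when $d_C(v_j,v_i)+t=d_C(v_j,v_a)$. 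For the edge problem one replaces $d(s,\cdot)$ by the minimum over the two endpoints, which shifts every threshold by a half-step; this is exactly what turns the floors appearing in $\mathcal B,\mathcal C$ into the ceilings appearing in $\mathcal D,\mathcal E$.

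For necessity I would argue the contrapositive, exhibiting for each configuration the undistinguished pair $x,x'$ highlighted in Figure \ref{Fig_configurations}. If $C$ contains $\mathcal A$, the two $S$-active indices $j_1,j_2$ satisfy $2j_1\equiv 2j_2\pmod g$ (they are antipodal on an even cycle), so the congruence $a+b\equiv 2j\pmod g$ holds for both and the corresponding symmetric pair of cycle vertices (resp.\ edges) is separated by no element of $S$. If $C$ contains $\mathcal B$ or $\mathcal D$, the pair is the endpoint of the $S$-free thread together with the cycle vertex (resp.\ edge) sitting at the matching cyclic distance from every $S$-active vertex; the index ranges in the definitions are precisely the positions for which such a match exists simultaneously for all active vertices. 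If $C$ contains $\mathcal C$ or $\mathcal E$, the thread is long enough that its endpoint collides in distance with a vertex (resp.\ edge) on the far arc of the cycle. In each case the verification is a direct substitution into the identities above.

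The sufficiency direction is the main obstacle, and I would prove it by contradiction: assume none of the relevant configurations is present, yet some cycle-sensitive pair $x,x'$ is separated by no vertex of $S$. ``Separated by no $S$-active vertex'' translates, through the identities, into a system of congruences on the indices of $x,x'$ relative to every active $v_j$ and, when a thread is involved, a lower bound on its length; I would then show this system is solvable only inside $\mathcal A,\mathcal B,\mathcal C$ (vertex case) or $\mathcal A,\mathcal D,\mathcal E$ (edge case), the desired contradiction. The delicate points that must be isolated are the parity of $g$ (only an even cycle admits the antipodal coincidence behind $\mathcal A$ and $\mathcal C$, while odd $g$ is what splits $\mathcal B$ from $\mathcal D$), the borderline thread lengths at which the inequalities in $\mathcal C$ and $\mathcal E$ become equalities, and the uniform half-integer shift distinguishing the edge problem from the vertex problem. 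Carrying out this case analysis in full is exactly the content of Lemmas 6, 7, 13 and 14 of \cite{SedSkreUnicyclic}, and the theorem follows by combining them.
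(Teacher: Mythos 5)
Your proposal ends exactly where the paper does: the paper gives no proof of this theorem at all, but imports it as an established result (Lemmas 6, 7, 13 and 14 of \cite{SedSkreUnicyclic}), and your sketch likewise defers the entire sufficiency case analysis to those same lemmas. The distance identities and the configuration-by-configuration necessity outline you supply are consistent with how those cited lemmas work, so the proposal is correct and takes essentially the same approach as the paper, namely reduction to the cited unicyclic results, with some added exposition.
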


In this paper we will extend this result to cactus graphs and then use it to
determine the exact value of the vertex and the edge metric dimensions of such
graphs. We first give an example how this approach with configurations can be
extended to cactus graphs.

\begin{example}
\label{Example_conf}Let $G$ be the cactus graph from Figure
\ref{Fig_configExample}. The graph $G$ contains six cycles and the set
$S=\{s_{1},s_{2},s_{3},s_{4},s_{5}\}$ is a smallest biactive branch-resolving
set in $G$. In the figure the set of $S$-active vertices on each cycle is
marked by a dashed circle. The cycle $C_{1}$ (resp. $C_{2}$, $C_{3}$, $C_{4}$,
$C_{5}$) with respect to $S$ contains configuration $\mathcal{A}$ (resp.
$\mathcal{B}$ and also $\mathcal{D}$, $\mathcal{C}$, $\mathcal{E}$ on odd
cycle, $\mathcal{E}$ on even cycle), so in each of these cycles there is a
pair of vertices and/or edges $x_{i}$ and $x_{i}^{\prime}$ which is not
distinguished by $S.$ The cycle $C_{6}$ does not contain any of the five
configurations as there is a geodesic triple of $S$-active vertices on
$C_{6},$ so all pairs of vertices and all pair of edges in $C_{6}$ are
distinguished by $S$.
\end{example}

\begin{figure}[h]
\begin{center}
\includegraphics[scale=0.9]{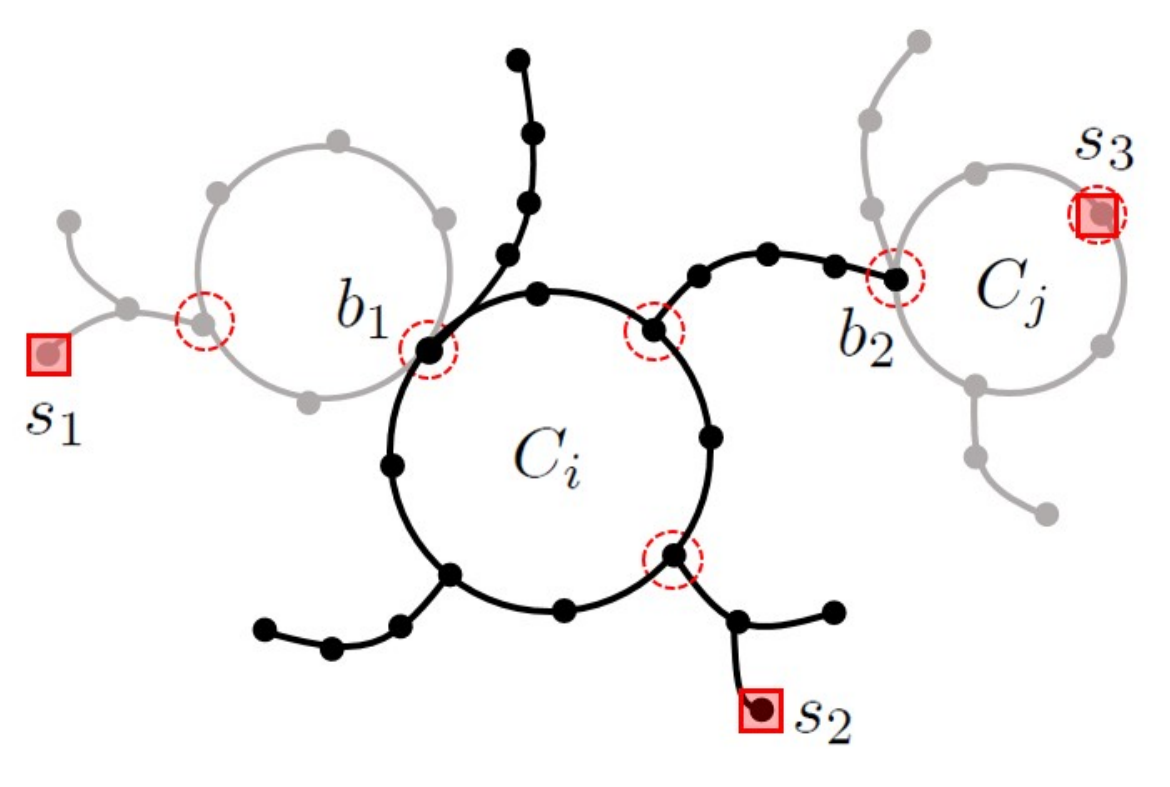}
\end{center}
\caption{A cactus graph $G$ with three cycles in which the unicyclic region
$G_{i}$ of the cycle $C_{i}$ is distinguished with $b_{1}$ and $b_{2}$ being
the boundary vertices of $G_{i}$. The set $S=\{s_{1},s_{2},s_{3}\}$ is a
smallest biactive branch-resolving set in $G$ for which a set of $S$-active
vertices is marked on each cycle. For the set $S,$ the regional set in $G_{i}$
is $S_{i}=\{s_{2},b_{1},b_{2}\}.$}%
\label{Fig_region}%
\end{figure}

Besides this configuration approach, in cactus graphs an additional condition
will have to be introduced for the situation when a pair of cycles share a vertex.

\section{Metric generators in cacti}

Let $G$ be a cactus graph with cycles $C_{1},\ldots,C_{c}.$ We say that a
vertex $v\in V(G)$ \emph{gravitates} to a cycle $C_{i}$ in $G$ if there is a
path from $v$ to a vertex from $C_{i}$ which does not share any edge nor any
internal vertex with any cycle of $G.$ A \emph{unicyclic region} of the cycle
$C_{i}$ from $G$ is the subgraph $G_{i}$ of $G$ induced by all vertices that
gravitate to $C_{i}$ in $G.$ The notion of unicyclic region of a cactus graph
is illustrated by Figure \ref{Fig_region}.

Notice that each unicyclic region $G_{i}$ is a unicyclic graph with its cycle
being $C_{i}.$ Also, considering the example from Figure \ref{Fig_region}, one
can easily notice that two distinct unicyclic regions may not be vertex
disjoint, as the path connecting vertex $b_{2}$ and the cycle $C_{i}$ belongs
both to $G_{i}$ and $G_{j}.$ But, it does hold that all unicyclic regions
cover the whole $G.$ We say that a subgraph $H$ of a graph $G$ is an
\emph{isometric} subgraph if $d_{H}(u,v)=d_{G}(u,v)$ for every pair of
vertices $u,v\in V(H).$ The following observation is obvious.

\begin{remark}
The unicyclic region $G_{i}$ of a cycle $C_{i}$ is an isometric subgraph of
$G.$
\end{remark}

Finally, we say that a vertex $v$ from a unicyclic region $G_{i}$ is a
\emph{boundary} vertex if $v\in V(C_{j})$ for $j\not =i.$ In the example from
Figure \ref{Fig_region}, the boundary vertices of the region $G_{i}$ are
$b_{1}$ and $b_{2}$.

Let $S$ be a biactive branch-resolving set in $G$ and let $G_{i}$ be a
unicyclic region in $G.$ For the set $S$ we define the \emph{regional set}
$S_{i}$ as the set obtained from $S\cap V(G_{i})$ by introducing all boundary
vertices from $G_{i}$ to $S$. For example, in Figure \ref{Fig_region} the set
$S_{i}=\{s_{2},b_{1},b_{2}\}$ is the regional set in the region of the cycle
$C_{i}$.

\begin{lemma}
\label{Lemma_regionGenerator}Let $G$ be a cactus graph with $c$ cycles
$C_{1},\ldots,C_{c}$ and let $S\subseteq V(G)$. If $S$ is a vertex (resp.
edge) metric generator in $G,$ then the regional set $S_{i}$ is a vertex
(resp. edge) metric generator in the unicyclic region $G_{i}$ for every
$i\in\{1,\ldots,c\}$.
\end{lemma}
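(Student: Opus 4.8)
The plan is to take an arbitrary pair of vertices (resp. edges) of $G_i$ and show it is distinguished by some vertex of $S_i$, exploiting that $S$ distinguishes the pair somewhere in $G$ together with the isometry of $G_i$ in $G$. Write $B$ for the set of boundary vertices of $G_i$, so that $S_i=(S\cap V(G_i))\cup B$ and $B\subseteq S_i\subseteq V(G_i)$. I would fix two distinct vertices $u,u'\in V(G_i)$ and a vertex $s\in S$ with $d_G(s,u)\neq d_G(s,u')$, and first dispose of the easy case $s\in V(G_i)$: then $s\in S\cap V(G_i)\subseteq S_i$, and since $G_i$ is isometric, $d_{G_i}(s,u)=d_G(s,u)\neq d_G(s,u')=d_{G_i}(s,u')$, so $s$ already distinguishes the pair inside $G_i$.

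The interesting case is $s\notin V(G_i)$, and the heart of the argument is the structural claim that every shortest $s$--$u$ path in $G$ first enters $V(G_i)$ through a boundary vertex. To prove it, I would let $b$ be the first vertex of such a path lying in $V(G_i)$ and let $b^-$ be its predecessor, so $b$ gravitates to $C_i$ while $b^-$ does not. Since $G$ is a cactus, the edge $b^-b$ is either a bridge or lies on a unique cycle $C_k$. If it lies on $C_k$, then $k\neq i$ (otherwise $b^-\in V(C_i)\subseteq V(G_i)$), so $b\in V(C_k)$ is a boundary vertex. If $b^-b$ is a bridge, then prepending it to a path witnessing the gravitation of $b$ to $C_i$ would also witness the gravitation of $b^-$ to $C_i$, unless the intermediate vertex $b$ lies on a cycle $C_k$ with $k\neq i$; since $b^-$ does not gravitate to $C_i$, this exception must occur, and then $b$ is again a boundary vertex.

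Given the claim, for every $u\in V(G_i)$ I would derive the distance identity
\[
d_G(s,u)=\min_{b\in B}\bigl(d_G(s,b)+d_{G_i}(b,u)\bigr),
\]
where $\le$ follows by concatenating a shortest $s$--$b$ path with a shortest $b$--$u$ path in $G_i$ (using $d_{G_i}(b,u)=d_G(b,u)$), and $\ge$ follows from the entry vertex supplied by the claim, which lies on a shortest $s$--$u$ path. Now suppose, for contradiction, that no boundary vertex distinguishes $u$ and $u'$ in $G_i$, i.e. $d_{G_i}(b,u)=d_{G_i}(b,u')$ for every $b\in B$. Then the two minima agree term by term, forcing $d_G(s,u)=d_G(s,u')$ and contradicting the choice of $s$. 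Hence some $b\in B\subseteq S_i$ distinguishes $u$ and $u'$ in $G_i$. The edge case is identical: writing $d_{G_i}(b,xy)=\min\{d_{G_i}(b,x),d_{G_i}(b,y)\}$ and swapping the order of the two minima turns the identity above into $d_G(s,xy)=\min_{b\in B}\bigl(d_G(s,b)+d_{G_i}(b,xy)\bigr)$, and the same contradiction applies.

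The main obstacle is precisely the structural claim that a shortest path entering the region from outside does so through a boundary vertex; once the resulting distance identity is in hand, the conclusion is only a short term-by-term comparison of two minima. I would therefore concentrate the care on the cactus case analysis (bridge versus cycle edge) and on verifying that the gravitation definition behaves as expected when a single edge $b^-b$ is appended to a gravitation-witnessing path.
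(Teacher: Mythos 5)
Your proof is correct, and at the top level it follows the same strategy as the paper's: a distinguishing vertex $s$ lying inside $G_i$ is handled directly by the isometry of $G_i$ in $G$, while a distinguishing vertex outside $G_i$ is replaced by boundary vertices, which by definition belong to $S_i$. The difference is in how the reduction to the boundary is executed. The paper simply asserts that for $s\notin V(G_i)$ the shortest paths from \emph{all} vertices (resp.\ edges) of $G_i$ to $s$ lead through one and the same boundary vertex $b$, and then transfers the failure of that single $b$ to a failure of $s$ via $d(x,s)=d(x,b)+d(b,s)$; this common-entry-vertex fact is true in a cactus, but the paper leaves it unproven. You instead prove the weaker statement that every shortest path from $s$ into $G_i$ enters through \emph{some} boundary vertex (your bridge versus cycle-edge case analysis is exactly the right cactus argument, including the observation that prepending a bridge to a gravitation-witnessing path is blocked only when the entry vertex lies on another cycle), and you compensate for possible non-uniqueness of the entry vertex with the identity $d_G(s,u)=\min_{b\in B}\bigl(d_G(s,b)+d_{G_i}(b,u)\bigr)$, comparing the two minima term by term; the swap of the two minima disposes of the edge case cleanly. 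What your route buys is self-containedness: you never need to know that the entry vertex is the same for all targets, which is precisely the point the paper glosses over, at the modest cost of the $\min$-formula bookkeeping. Logically the two proofs are otherwise equivalent (yours is phrased directly, the paper's as a contradiction starting from an undistinguished pair in $G_i$).
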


\begin{proof}
Suppose first that there is a cycle $C_{i}$ in $G$ such that the regional set
$S_{i}$ is not a vertex (resp. an edge) metric generator in the unicyclic
region $G_{i}.$ This implies that there exists a pair of vertices (resp.
edges) $x$ and $x^{\prime}$ in $G_{i}$ which are not distinguished by $S_{i}.$
We will show that $x$ and $x^{\prime}$ are not distinguished by $S$ in $G$
either. Suppose the contrary, i.e. there is a vertex $s\in S$ which
distinguishes $x$ and $x^{\prime}$ in $G.$ If $s\in V(G_{i}),$ then $s\in
S_{i}.$ Since $G_{i}$ is an isometric subgraph of $G,$ then $x$ and
$x^{\prime}$ would be distinguished by $s\in S_{i}$ in $G_{i},$ a
contradiction. Assume therefore that $s\not \in V(G_{i}).$ Notice that the
shortest path from every vertex (resp. edge) in $G_{i}$ to $s$ leads through a
same boundary vertex $b$ in $G_{i}.$ The definition of $S_{i}$ implies $b\in
S_{i},$ so $x$ and $x^{\prime}$ are not distinguished by $b.$ Therefore, we
obtain%
\[
d(x,s)=d(x,b)+d(b,s)=d(x^{\prime},b)+d(b,s)=d(x^{\prime},s),
\]
so $x$ and $x^{\prime}$ are not distinguished by $s$ in $G,$ a contradiction.
\end{proof}

Notice that the condition from Lemma \ref{Lemma_regionGenerator} is necessary
for $S$ to be a metric generator, but it is not sufficient as is illustrated
by the graph shown in Figure \ref{Fig_incidence} in which every regional set
$S_{i}$ is a vertex (resp. an edge) metric generator in the corresponding
region $G_{i}$, but there still exists a pair of vertices (resp. edges) which
is not distinguished by $S,$ so $S$ is not a vertex (resp. an edge) metric
generator in $G$.

\begin{figure}[h]
\begin{center}
\includegraphics[scale=0.9]{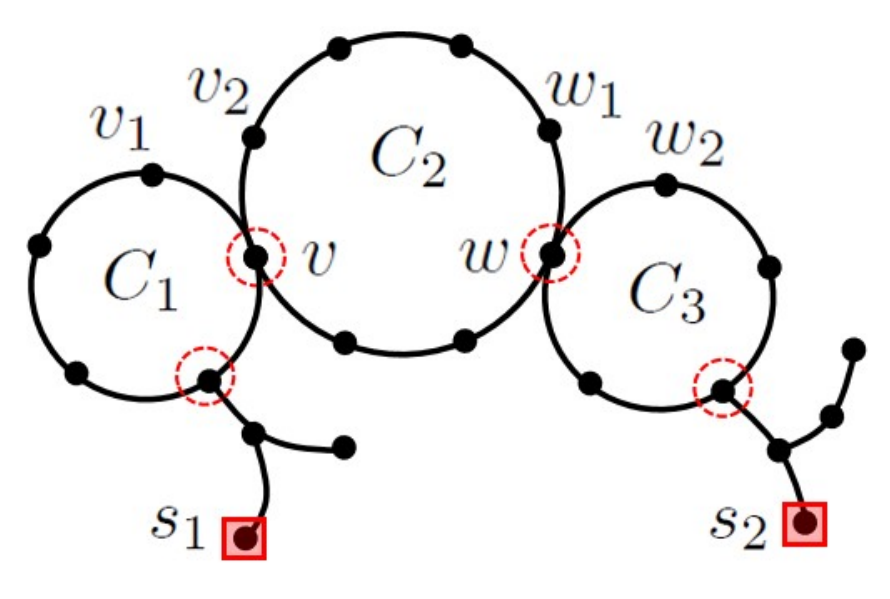}
\end{center}
\caption{A cactus graph $G$ with three cycles and a smallest biactive
branch-resolving set $S=\{s_{1},s_{2}\}$ in $G.$ Every regional set $S_{i}$ is
a vertex (resp. an edge) metric generator in the corresponding region $G_{i}$,
but still a pair of vertices $v_{1}$ and $v_{2}$ is not distinguished by $S$.
The pair of edges $v_{1}v$ and $v_{2}v$ is also not distinguished by $S$ and
the same holds for the pair of edges $w_{1}w$ and $w_{2}w.$}%
\label{Fig_incidence}%
\end{figure}

Next, we will introduce notions which are necessary to state a condition which
will be both necessary and sufficient for a biactive branch-resolving set $S$
to be a vertex (resp. an edge) metric generator in a cactus graph $G.$ An
$S$\emph{-path} of the cycle $C_{i}$ is any subpath of $C_{i}$ which contains
all $S$-active vertices on $C_{i}$ and is of minimum possible length. We
denote an $S$\emph{-}path of the cycle $C_{i}$ by $P_{i}$. Notice that the
end-vertices of an $S$-path are always $S$-active, otherwise it would not be
shortest. For example, on the cycle $C_{2}$ in Figure \ref{Fig_incidence}
there are two different paths connecting $S$-active vertices $v$ and $w,$ one
is of the length $3$ and the other of length $5,$ so the shorter one is an
$S$-path. Also, an $S$\emph{-}path $P_{i}$ of a cycle $C_{i}$ may not be
unique, as there may exist several shortest subpaths of $C_{i}$ containing all
$S$-active vertices on $C_{i},$ but if the length of $P_{i}$ satisfies
$\left\vert P_{i}\right\vert \leq\left\lceil g_{i}/2\right\rceil -1$ then
$P_{i}$ is certainly unique and its end-vertices are $v_{0}$ and $v_{k}$ in
the canonical labelling of $C_{i}.$

\begin{definition}
Let $G$ be a cactus graph with cycles $C_{1},\ldots,C_{c}$ and let $S$ be a
biactive branch-resolving set in $G.$ We say that a vertex $v\in V(C_{i})$ is
\emph{vertex-critical} (resp. \emph{edge-critical}) on $C_{i}$ with respect to
$S$ if $v$ is an end-vertex of $P_{i}$ and $\left\vert P_{i}\right\vert
\leq\left\lfloor g_{i}/2\right\rfloor -1$ (resp. $\left\vert P_{i}\right\vert
\leq\left\lceil g_{i}/2\right\rceil -1$).
\end{definition}

Notice that the notion of a vertex-critical and an edge-critical vertex
differs only on odd cycles. We say that two distinct cycles $C_{i}$ and
$C_{j}$ of a cactus graph $G$ are \emph{vertex-critically incident} (resp.
\emph{edge-critically incident}) with respect to $S$ if $C_{i}$ and $C_{j}$
share a vertex $v$ which is vertex-critical (resp. edge-critical) with respect
to $S$ on both $C_{i}$ and $C_{j}$. Notice that on odd cycles the required
length of an $S$-path $P_{i}$ for $v$ to be vertex-critical differs from the
one required for $v$ to be edge-critical, while on even cycles the required
length is the same.

To illustrate this notion, let us consider the cycle $C_{2}$ in the graph from
Figure \ref{Fig_incidence}. Vertices $v$ and $w$ are both vertex-critical and
edge-critical on $C_{2}$ with respect to $S$ from the figure. Vertex $v$
belongs also to $C_{1}$ and it is also both vertex- and edge-critical on
$C_{1}.$ Therefore, cycles $C_{1}$ and $C_{2}$ are both vertex- and
edge-critically incident, the consequence of which is that a pair of vertices
$v_{1}$ and $v_{2}$ which are neighbors of $v$ and a pair of edges $v_{1}v$
and $v_{2}v$ which are incident to $v$ are not distinguished by $S.$ On the
other hand, vertex $w$ belongs also to $C_{3}$ on which it is edge-critical,
but it is not vertex-critical since $P_{3}$ is not short enough. So, $C_{2}$
and $C_{3}$ are edge-critically incident, but not vertex-critically incident.
Consequently, a pair of edges $w_{1}w$ and $w_{2}w$ is not distinguished by
$S,$ but a pair of vertices $w_{1}$ and $w_{2}$ is distinguished by $S.$ We
will show in the following lemma that this holds in general.

\begin{lemma}
\label{Lemma_incidence}Let $G$ be a cactus graph with $c$ cycles $C_{1}%
,\ldots,C_{c}$ and let $S$ be a biactive branch-resolving set in $G$. If $S$
is a vertex (resp. an edge) metric generator in $G,$ then there is no pair of
cycles in $G$ which are vertex-critically (resp. edge critically) incident
with respect to $S$.
\end{lemma}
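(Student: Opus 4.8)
The plan is to prove the contrapositive: assume two cycles $C_i$ and $C_j$ are vertex-critically (resp. edge-critically) incident at a shared vertex $v$, and exhibit an explicit pair of vertices (resp. edges) that $S$ fails to distinguish, thereby showing $S$ is not a vertex (resp. edge) metric generator. The candidate pair is the one already flagged in the discussion preceding the lemma: the two neighbors $v_1$ and $v_2$ of $v$ lying on the two cycles (resp. the two incident edges $v_1v$ and $v_2v$), where $v_1 \in V(C_i)$ and $v_2 \in V(C_j)$ are chosen as the neighbors of $v$ that lie off the respective $S$-paths $P_i$ and $P_j$ (i.e.\ the neighbor of $v$ on the ``long arc'' side of each cycle).

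The key computation is to show that for every $s \in S$, we have $d(s,v_1)=d(s,v_2)$ (resp.\ $d(s,v_1v)=d(s,v_2v)$). First I would observe that since $S$ is biactive and branch-resolving, every $s\in S$ lies in $T_u(C_i)$ for some $S$-active vertex $u$ on $C_i$, and similarly for $C_j$; the shortest path from $s$ to any vertex of $C_i$ enters $C_i$ through such a $u$. The criticality condition $|P_i|\le\lfloor g_i/2\rfloor-1$ (resp.\ $\le\lceil g_i/2\rceil-1$) guarantees that $v$ is an endpoint of the unique short $S$-path, so all $S$-active vertices of $C_i$ lie on one arc from $v$, and the neighbor $v_1$ sits on the opposite arc. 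The arithmetic to carry out is that for any $S$-active entry vertex $u$ on $C_i$, the shortness bound forces $d(u,v_1)=d(u,v)+1$ (the geodesic from $u$ to $v_1$ passes through $v$), which is exactly what makes $v_1$ ``invisible'' to $S$ relative to $v$: concretely $d(s,v_1)=d(s,v)+1$ for every $s\in S$ whose entry lies on $C_i$'s side. The symmetric statement holds on $C_j$ for $v_2$.

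The main obstacle will be handling $s \in S$ depending on which region it sits in, and here the earlier machinery does the work. I would split $S$ according to whether the shortest path from $s$ to $v$ approaches through $C_i$'s side or $C_j$'s side (or through $v$ itself from some third direction), and in each case reduce to the one-cycle distance computation above. For an $s$ approaching from the $C_i$ side, I get $d(s,v_1)=d(s,v)+1$ from the criticality of $v$ on $C_i$; I must then separately argue $d(s,v_2)=d(s,v)+1$, which follows because $v_2$ is the off-$P_j$ neighbor and the geodesic from such an $s$ to $v_2$ must route through $v$. The vertex-case bound $\lfloor g/2\rfloor-1$ versus the edge-case bound $\lceil g/2\rceil-1$ is precisely what separates the two statements: on an odd cycle the extra slack in the edge bound means the neighbor-vertex distances can differ (so the vertex pair is resolved) even though the incident-edge distances coincide, matching the $w$-example discussion. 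I would phrase the distance argument once, parametrized by the floor/ceiling bound, so the vertex and edge versions are proved in parallel.

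For the edge version the only change is to replace $d(s,v_\ell)$ by $d(s,v_\ell v)=\min\{d(s,v_\ell),d(s,v)\}$; since $d(s,v_\ell)=d(s,v)+1\ge d(s,v)$ in all the relevant cases, this minimum equals $d(s,v)$ on both sides, immediately giving $d(s,v_1v)=d(s,v)=d(s,v_2v)$ and hence the pair of edges is undistinguished under the (weaker) edge-criticality hypothesis. This completes the contrapositive and establishes the lemma.
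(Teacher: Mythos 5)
Your proposal is correct and takes essentially the same route as the paper's proof: both select the two neighbors (resp.\ incident edges) of the shared critical vertex $v$ lying off the $S$-paths $P_i$ and $P_j$, and use the length bounds on $P_i$, $P_j$ to force every shortest path from this pair to every vertex of $S$ through $v$, so the pair is undistinguished. The paper states it as a direct contradiction rather than a contrapositive and leaves the distance arithmetic implicit, but the content is identical.
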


\begin{proof}
Let $S$ be a vertex (resp. an edge) metric generator in $G.$ Suppose the
contrary, i.e. there are two distinct cycles $C_{i}$ and $C_{j}$ in $G$ which
are vertex-critically (resp. edge-critically) incident with respect to $S$.
This implies that $C_{i}$ and $C_{j}$ share a vertex $v$ which is
vertex-critical (resp. edge-critical) on both $C_{i}$ and $C_{j}$. Let $x$ and
$x^{\prime}$ be a pair of vertices (resp. edges) which are neighbors (resp.
incident) to $v$ on cycles $C_{i}$ and $C_{j}$ respectively, but which are not
contained on paths $P_{i}$ and $P_{j}.$ The length of paths $P_{i}$ and
$P_{j}$ which is required by the definition of a vertex-critical (resp.
edge-critical) vertex implies that a shortest path from both $x$ and
$x^{\prime}$ to all vertices from $P_{i}$ and $P_{j}$ leads through $v.$ Since
$P_{i}$ and $P_{j}$ contain all $S$-active vertices on $C_{i}$ and $C_{j},$
this further implies that a shortest path from both $x$ and $x^{\prime}$ to
all vertices from $S$ leads through $v.$ Since $d(x,v)=d(x^{\prime},v)$, it
follows that $x$ and $x^{\prime}$ are not distinguished by $S,$ a contradiction.
\end{proof}

Each of Lemmas \ref{Lemma_regionGenerator} and \ref{Lemma_incidence} gives a
necessary condition for a biactive branch-resolving set $S$ to be a vertex
(resp. an edge) metric generator in a cactus graph $G$. Let us now show that
these two necessary conditions taken together form a sufficient condition for
$S$ to be a vertex (resp. an edge) metric generator.

\begin{lemma}
\label{Lemma_sufficient}Let $G$ be a cactus graph with $c$ cycles
$C_{1},\ldots,C_{c}$ and let $S$ be a biactive branch-resolving set in $G$. If
a regional set $S_{i}$ is a vertex (resp. an edge) metric generator in the
unicyclic region $G_{i}$ for every $i=1,\ldots,c$ and there are no
vertex-critically (resp. edge-critically) incident cycles in $G,$ then $S$ is
a vertex (resp. an edge) metric generator in $G.$
\end{lemma}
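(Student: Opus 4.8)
The plan is to argue by contradiction: assume both hypotheses hold but that $S$ is \emph{not} a vertex (resp. an edge) metric generator, so there is a pair of vertices (resp. edges) $x$ and $x'$ not distinguished by $S$. I would split the argument according to whether some unicyclic region contains both members of the pair.

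First I would treat the case where a single region $G_i$ contains both $x$ and $x'$. By hypothesis the regional set $S_i$ is a generator in $G_i$, so some $t\in S_i$ distinguishes $x$ and $x'$ inside $G_i$; since $G_i$ is an isometric subgraph of $G$, this distinguishing persists with $G$-distances. If $t\in S$ we are done. The only remaining possibility is $t=b$, a boundary vertex of $G_i$ lying on a cycle $C_j$ with $j\neq i$. Here I would use biactivity: $C_j$ carries at least two $S$-active vertices, and since $G_i$ meets $C_j$ only at $b$, at most the vertex $b$ is $S$-active from the $G_i$ side, so there is an $S$-active vertex $u\neq b$ on $C_j$ whose hanging subtree $T_u(C_j)$ contains a genuine vertex $s'\in S$ on the far side of $b$. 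Every shortest path from $x$ or from $x'$ to $s'$ runs through $b$, whence $d(x,s')=d(x,b)+d(b,s')$ and $d(x',s')=d(x',b)+d(b,s')$; as $d(x,b)\neq d(x',b)$, the vertex $s'$ distinguishes the pair, a contradiction. The same computation works verbatim for edges, reading $d(\cdot,b)$ as the edge-to-vertex distance.

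The remaining, harder case is when no single region contains both $x$ and $x'$. The aim is to force such an undistinguished cross-region pair into exactly the configuration ruled out by Lemma \ref{Lemma_incidence}. Analyzing the block structure of the cactus, I would locate a cut vertex $v$ separating $x$ from $x'$ and show, again by biactivity, that there are vertices of $S$ on both sides of $v$ whose shortest paths to $x$ and to $x'$ pass through $v$; comparing the corresponding distances forces $d(x,v)=d(x',v)$ and pins $x$ and $x'$ onto arcs of two cycles $C_i\ni v$ and $C_j\ni v$ that avoid the $S$-paths $P_i$ and $P_j$. A length count then shows that $P_i$ and $P_j$ must be short enough that $v$ is vertex-critical (resp. edge-critical) on both $C_i$ and $C_j$, so these cycles are vertex-critically (resp. edge-critically) incident, contradicting the second hypothesis.

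I expect this second case to be the main obstacle: the delicate part is proving that a cross-region undistinguished pair can only arise from the shared critical-vertex configuration, which requires careful shortest-path bookkeeping in the cactus and a reduction, within each of $G_i$ and $G_j$, to the $S$-path length bounds in the definitions of vertex- and edge-critical vertices. In particular one must rule out the possibility that $x$ and $x'$ sit on long cycle arcs or behind further cut vertices, where an $S$-vertex exposed by biactivity, exactly as in the first case, would already separate them.
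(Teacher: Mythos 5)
Your Case~1 (both $x$ and $x'$ in one region $G_i$) is correct and is essentially the paper's own argument: isometry of $G_i$ disposes of $t\in S$, and when $t$ is a boundary vertex $b$, biactivity supplies a vertex $s'\in S$ beyond $b$ all of whose shortest paths from $x$ and from $x'$ pass through $b$. The genuine gap is Case~2, which you yourself defer as ``the delicate part'': what you give there is a plan whose pivotal steps are asserted rather than proved, and the sketch omits the one mechanism that makes those steps provable. Concretely, the paper's engine for the whole cross-region case is the following move, absent from your proposal: from non-distinguishability one first gets only the one-sided inequality $d(x,b)\leq d(x',b)$ (the paper's~(\ref{For_riste})); then, whenever the inequality is strict, or whenever some $S$-active vertex $v_k$ on $C_i$ admits a shortest path from $x$ avoiding $b$, one routes through $v_k$ to a vertex $s_k\in S$ lying in the component $T_{v_k}$ of $G-E(C_i)$, and the chain $d(x,s_k)\leq d(x,v_k)+d(v_k,s_k)<d(x',v_k)+d(v_k,s_k)=d(x',s_k)$ exhibits a distinguishing vertex (the equalities use that every path from $x'$ to $v_k$ or to $s_k$ must pass through $b$ and then through $v_k$). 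Only when no such $v_k$ exists on either cycle --- all shortest paths from $x$ to the $S$-active vertices of $C_i$, and from $x'$ to those of the second cycle, pass through $b$ --- do the definitions of $S$-path and of vertex-/edge-critical vertex apply and yield critical incidence, contradicting the hypothesis. Your ``length count'' is a placeholder for this dichotomy, not a proof of it.

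A second, structural problem: your plan presupposes that the separating cut vertex lies on two cycles, so that $x$ and $x'$ can be ``pinned onto arcs'' of $C_i$ and $C_j$. But the gateway boundary vertex $b$ of $G_i$ toward $x'$ need not lie on $C_i$ at all --- it can be a leaf of $G_i$, like $b_2$ in Figure~\ref{Fig_region} --- and then there are no two arcs through $b$ to pin anything onto. This is the paper's Subcase~2.a, handled by a different argument: $b$ has a unique neighbor $b'$ in $G_i$, which upgrades $d(x,b)\leq d(x',b)$ to the strict $d(x,b')<d(x',b')$, after which the routing argument above produces a distinguishing $s_k$. Your remark that such situations are dispatched ``exactly as in the first case'' is not right: Case~1 rests on the regional generator $S_i$ distinguishing the pair \emph{inside} $G_i$, which is unavailable when $x'\notin V(G_i)$. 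So while your skeleton (contradiction, same-region/cross-region split, targeting the configuration of Lemma~\ref{Lemma_incidence}) matches the paper's, the substance of the cross-region case --- which is the substance of the lemma --- is missing.
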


\begin{proof}
Let $x$ and $x^{\prime}$ be a pair of vertices (resp. edges) from $G.$ We want
to show that $S$ distinguishes $x$ and $x^{\prime}.$ In order to do so, we
distinguish the following two cases.

\medskip\noindent\textbf{Case 1:}\emph{ }$x$\emph{ and }$x^{\prime}$\emph{
belong to a same unicyclic region }$G_{i}$\emph{ of }$G.$ Since the regional
set $S_{i}$ is a vertex (resp. an edge) metric generator in $G_{i},$ there is
a vertex $s\in S_{i}$ which distinguishes $x$ and $x^{\prime}$ in $G_{i}.$ If
$s\in S,$ then the fact that $G_{i}$ is an isometric subgraph of $G$ implies
that the pair $x$ and $x^{\prime}$ is distinguished by the same $s$ in $G$ as
well. Assume therefore that $s\not \in S,$ so the definition of the regional
set $S_{i}$ implies that $s$ is a boundary vertex of $G_{i}.$ Let $s^{\prime
}\in S$ be a vertex in $G$ such that the shortest path from $s^{\prime}$ to
both $x$ and $x^{\prime}$ leads through the boundary vertex $s.$ Recall that
such a vertex $s^{\prime}$ must exist since $S$ is biactive. The fact that $s$
distinguishes $x$ and $x^{\prime}$ in $G_{i}$, implies $d(x,s)\not =%
d(x^{\prime},s),$ which further implies%
\[
d(x,s^{\prime})=d(x,s)+d(s,s^{\prime})\not =d(x^{\prime},s)+d(s,s^{\prime
})=d(x^{\prime},s^{\prime}),
\]
so the pair $x$ and $x^{\prime}$ is distinguished by $S$ in $G$.

\medskip\noindent\textbf{Case 2:} $x$\emph{ and }$x^{\prime}$\emph{ do not
belong to a same unicyclic region of }$G.$ Let us assume that $x$ belongs to
$G_{i}$ and $x^{\prime}$ does not belong to $G_{i},$ and say it belongs to
$G_{j}$ for $j\not =i$. If $x$ and $x^{\prime}$ are distinguished by a vertex
$s\in S\cap V(G_{i}),$ then the claim is proven, so let us assume that $x$ and
$x^{\prime}$ are not distinguished by any $s\in S\cup V(G_{i}).$ Since $x$ and
$x^{\prime}$ do not belong to a same unicyclic region, there exists a boundary
vertex $b$ of the unicyclic region $G_{i}$ such that the shortest path from
$x$ to $x^{\prime}$ leads through $b.$ Let $s_{b}$ be a vertex from $S$ such
that the shortest path from $x$ to $s_{b}$ also leads through $b,$ which must
exist since $S$ is biactive. We want to prove that $x$ and $x^{\prime}$ are
distinguished by $s_{b}.$ Let us suppose the contrary, i.e. $d(x,s_{b}%
)=d(x^{\prime},s_{b}).$ Then we have the following%
\[
d(x,b)+d(b,s_{b})=d(x,s_{b})=d(x^{\prime},s_{b})\leq d(x^{\prime}%
,b)+d(b,s_{b}),
\]
from which we obtain
\begin{equation}
d(x,b)\leq d(x^{\prime},b). \label{For_riste}%
\end{equation}
Now, we distinguish the following two subcases.

\medskip\noindent\textbf{Subcase 2.a:} $b$\emph{ does not belong to }%
$V(C_{i}).$ Notice that by the definition of the unicyclic region, any acyclic
structure hanging at $b$ in $G$ is not included in $G_{i}$, as is illustrated
by $b_{2}$ from Figure \ref{Fig_region}, which implies $b$ is a leaf in
$G_{i}.$ Let $b^{\prime}$ be the only neighbor of $b$ in $G_{i}.$ The
inequality (\ref{For_riste}) further implies $d(x,b^{\prime})<d(x^{\prime
},b^{\prime})$ since $x$ belongs to $G_{i}$ and $x^{\prime}$ does not. Let
$v_{0}$ be the vertex from $C_{i}$ closest to $b,$ which implies $v_{0}$ is
$S$-active on $C_{i}.$ Let $v_{k}$ be an $S$-active vertex on $C_{i}$ distinct
from $v_{0},$ such a vertex $v_{k}$ must exist on $C_{i}$ because we assumed
$S$ is biactive. So, we have
\[
d(x,v_{k})\leq d(x,b^{\prime})+d(b^{\prime},v_{k})<d(x^{\prime},b^{\prime
})+d(b^{\prime},v_{k})=d(x^{\prime},v_{k}).
\]
Let $s_{k}$ be a vertex from $S$ which belongs to the connected component of
$G-E(C_{i})$ containing $v_{k}.$ Then we have%
\[
d(x,s_{k})\leq d(x,v_{k})+d(v_{k},s_{k})<d(x^{\prime},v_{k})+d(v_{k}%
,s_{k})=d(x^{\prime},s_{k}).
\]
Therefore, $S$ distinguishes $x$ and $x^{\prime},$ so $S$ is a vertex (resp.
an edge) metric generator in $G.$

\medskip\noindent\textbf{Subcase 2.b:} $b$\emph{ belongs to }$V(C_{i}).$ Since
$b$ is a boundary vertex of the unicyclic region $G_{i},$ this implies there
is a cycle $C_{l},$ for $l\not =i,$ such that $b\in V(C_{l})$. Therefore,
cycles $C_{i}$ and $C_{l}$ share the vertex $b.$ Notice that any acyclic
structure hanging at $b$ belongs to both $G_{i}$ and $G_{l}$, as is
illustrated by $b_{1}$ from Figure \ref{Fig_region}. If $x^{\prime}$ belongs
to $G_{l},$ then neither $x$ nor $x^{\prime}$ can belong to an acyclic
structure hanging at $b,$ as we assumed that $x$ and $x^{\prime}$ do not
belong to a same component. On the other hand, if $x^{\prime}$ does not belong
to $G_{l}$ and $x$ belongs to an acyclic structure hanging at $b,$ then we
switch $G_{i}$ by $G_{l}$ and assume that $x$ belongs to $G_{l}.$ This way we
assure that neither $x$ not $x^{\prime}$ belong to an acyclic structure
hanging at $b.$

If $d(x,b)<d(x^{\prime},b),$ let $v_{k}$ be an $S$-active vertex on $C_{i}$
distinct from $b,$ which must exist as $S$ is biactive. From (\ref{For_riste})
we obtain%
\[
d(x,v_{k})\leq d(x,b)+d(b,v_{k})<d(x^{\prime},b)+d(b,v_{k})=d(x^{\prime}%
,v_{k}),
\]
so similarly as in previous subcase $x$ and $x^{\prime}$ are distinguished by
a vertex $s_{k}\in S$ which belongs to the connected component of $G-E(C_{i})$
which contains $v_{k}.$

Therefore, assume that $d(x,b)=d(x^{\prime},b).$ If a shortest path from $x$
to all $S$-active vertices on $C_{i}$ and a shortest path from $x^{\prime}$ to
all $S$-active vertices on $C_{l}$ leads through $b,$ then $x^{\prime}$
belongs to $C_{l},$ i.e. $j=l$, and the pair of cycles $C_{i}$ and $C_{l}$ are
vertex-critically (resp. edge-critically) incident, a contradiction. So, we
may assume there is an $S$-active vertex $v_{k}$, say on $C_{i}$, such that a
shortest path from $x$ to $v_{k}$ does not lead through $b.$ Therefore,
\[
d(x,v_{k})<d(x,b)+d(b,v_{k})=d(x^{\prime},b)+d(b,v_{k})=d(x^{\prime},v_{k}).
\]
But now, similarly as in previous cases we have that $x$ and $x^{\prime}$ are
distinguished by a vertex $s_{k}\in S$ which is contained in the connected
component of $G-E(C_{i})$ containing $v_{k}.$
\end{proof}

Let us now relate these results with configurations $\mathcal{A},$
$\mathcal{B}$, $\mathcal{C}$, $\mathcal{D}$ and $\mathcal{E}$.

\begin{lemma}
\label{Obs_RegionConfiguration}Let $G$ be a cactus graph and let $S$ be a
biactive branch-resolving set in $G.$ A cycle $C_{i}$ of the graph $G$
contains configuration $\mathcal{A}$ (or $\mathcal{B}$ or $\mathcal{C}$ or
$\mathcal{D}$ or $\mathcal{E}$) with respect to $S$ in $G$ if and only if
$C_{i}$ contains the respective configuration with respect to $S_{i}$ in
$G_{i}.$
\end{lemma}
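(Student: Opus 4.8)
The plan is to reduce the whole biconditional to a single structural observation, since each of the five configurations is defined purely in terms of data that are intrinsic to the cycle and its hanging threads: the length $g$ (which is the same in $G$ and $G_i$), the number $a_S(C_i)$ of active vertices, the canonical label $k$ (which is determined by the \emph{set} of active vertices alone), and the presence and lengths of $S$-free threads hanging at prescribed cycle vertices. So I would first argue that it suffices to prove two claims: (i) a vertex $v\in V(C_i)$ is $S$-active on $C_i$ in $G$ if and only if it is $S_i$-active on $C_i$ in $G_i$; and (ii) the $S$-free threads hanging at vertices of $C_i$ in $G$ are exactly the $S_i$-free threads hanging at those vertices in $G_i$, with identical lengths. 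Granting (i) and (ii), the sets of active vertices coincide, hence so do $a_S(C_i)=a_{S_i}(C_i)$ and the canonical labeling and its $k$, and the thread data coincide as well; reading off the definitions of $\mathcal{A},\mathcal{B},\mathcal{C},\mathcal{D},\mathcal{E}$ then gives the equivalence for all five simultaneously.

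For claim (i) I would write $T_v^{(i)}$ for the component of $G_i-E(C_i)$ containing $v$, noting $T_v^{(i)}=T_v(C_i)\cap V(G_i)$. For the direction $G\to G_i$: given $s\in S\cap V(T_v(C_i))$, if $s\in V(G_i)$ then $s\in S_i$ and $s\in T_v^{(i)}$; if $s\notin V(G_i)$ then the $v$--$s$ path inside $T_v(C_i)$ must exit $G_i$ through a boundary vertex $b$, and the initial segment from $v$ to $b$ keeps $b$ in $T_v^{(i)}$, while $b\in S_i$ by definition. Either way $v$ is $S_i$-active. For the converse $G_i\to G$: take $s\in S_i\cap V(T_v^{(i)})$; if $s\in S$ we are done since $T_v^{(i)}\subseteq T_v(C_i)$, and the only remaining case is that $s$ is a boundary vertex $b\in V(C_l)$ with $l\neq i$ and $b\notin S$.

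This last case is the step I expect to be the main obstacle, and here I would invoke biactivity together with the cactus structure. Since $S$ is biactive, $a_S(C_l)\geq 2$, so $C_l$ carries an $S$-active vertex $u\neq b$; because the cycles are pairwise edge-disjoint, the only connection between $u$ and $b$ (and hence between $u$ and $C_i$) that avoids $E(C_l)$ would create a second cycle sharing an arc of $C_l$, which is impossible, so the $S$-witness of $u$ lies in the $C_l$-side component and therefore inside $T_v(C_i)$. Thus $v$ is $S$-active in $G$, completing (i). Claim (ii) is then routine: every thread hanging at a vertex of $C_i$ consists of vertices of degree at most two lying on no cycle, so all of them gravitate to $C_i$, belong to $G_i$, and are never boundary vertices; consequently a thread vertex lies in $S$ exactly when it lies in $S_i$, so a thread is $S$-free iff it is $S_i$-free and its length is computed identically in $G$ and $G_i$. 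Combining (i) and (ii) yields the stated equivalence.
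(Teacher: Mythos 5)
Your overall strategy coincides with the paper's: both proofs hinge on showing that the data determining the five configurations --- the length $g$, the set of $S$-active vertices on $C_i$ (hence $a_S(C_i)$, the canonical labelling and $k$), and the $S$-free threads at cycle vertices --- are the same for $(G,S)$ as for $(G_i,S_i)$, after which the equivalence is read off the definitions. Your claim (i) is exactly the paper's central assertion (the paper derives it in one sentence from biactivity: behind every boundary vertex $b$ there is a vertex of $S$ whose shortest path to $C_i$ passes through $b$), and your two-directional argument for it, including the cactus fact that distinct vertices of $C_l$ fall into distinct components of $G-E(C_l)$, is a correct and more detailed version of that sentence.

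The soft spot is claim (ii). You state it as an equality of the two thread families but argue only one inclusion: that every thread of $G$ hanging at a vertex of $C_i$ lies in $G_i$, contains no boundary vertex, and hence is $S$-free iff it is $S_i$-free. What is missing is the converse, and that is precisely where the regional set earns its keep: $G_i$ may contain threads that are not threads of $G$ at all, namely paths that in $G$ continue into another cycle and are truncated in $G_i$ at a boundary vertex (in Figure \ref{Fig_region}, $b_2$ is a leaf of $G_i$ but has degree at least $3$ in $G$). If such a truncated path could be $S_i$-free, a configuration could be present on $C_i$ with respect to $S_i$ in $G_i$ with no counterpart in $G$, and the backward implication of the lemma would fail; your argument as written does not exclude this, and your justifying sentence ("vertices of degree at most two lying on no cycle") is simply false for these truncated threads of $G_i$. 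The repair is short and uses a fact you already invoke in claim (i): every boundary vertex lies in $S_i$, so an $S_i$-free thread of $G_i$ contains no boundary vertex; since a non-boundary vertex of $G_i$ keeps all of its $G$-neighbors inside $G_i$, such a thread retains its degrees in $G$, hence is a thread of $G$, and it is $S$-free because membership in $S$ and in $S_i$ agree on non-boundary vertices of $G_i$. With that addition your proof is complete, and it is in fact more careful than the paper's own, which compresses both (i) and (ii) into the remark that the configurations depend only on the positions of the $S$-active vertices.
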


\begin{proof}
Let $G$ be a cactus graph with cycles $C_{1},\ldots,C_{c}$ and let $S$ be a
biactive branch-resolving set in $G.$ Since $S$ is a biactive set, for every
boundary vertex $b$ in a unicyclic region $G_{i},$ there is a vertex $s\in S$
such that the shortest path from $s$ to $C_{i}$ leads through $b,$ as it is
shown in Figure \ref{Fig_region}. This implies that the set of $S$-active
vertices on $C_{i}$ in $G$ is the same as the set of $S_{i}$-active vertices
on $C_{i}$ in $G_{i}.$ Since the presence of configurations $\mathcal{A}$,
$\mathcal{B}$, $\mathcal{C}$, $\mathcal{D}$ and $\mathcal{E}$ on a cycle
$C_{i}$, with respect to a set $S,$ by definition depends on the position of
$S$-active vertices on $C_{i},$ the claim follows.
\end{proof}

Notice that Lemmas \ref{Lemma_regionGenerator}, \ref{Lemma_incidence} and
\ref{Lemma_sufficient} give us a condition for $S$ to be a vertex (resp. an
edge) metric generator in a cactus graph, which is both necessary and
sufficient. In the light of Lemma \ref{Obs_RegionConfiguration}, we can
further apply Theorem \ref{Lemma_configurations} to obtain the following
result which unifies all our results.

\begin{theorem}
\label{Cor_generatorCharacterization}Let $G$ be a cactus graph with $c$ cycles
$C_{1},\ldots,C_{c}$ and let $S$ be a biactive branch-resolving set in $G$.
The set $S$ is a vertex (resp. an edge) metric generator if and only if each
cycle $C_{i}$ does not contain any of the configurations $\mathcal{A}$,
$\mathcal{B}$, and $\mathcal{C}$ (resp. $\mathcal{A}$, $\mathcal{D}$, and
$\mathcal{E}$) and there are no vertex-critically (resp. edge-critically)
incident cycles in $G$ with respect to $S$.
\end{theorem}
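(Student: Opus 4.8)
The plan is to prove the equivalence by chaining together the lemmas already established, using Lemma \ref{Obs_RegionConfiguration} as the bridge that translates between configurations seen in the whole graph $G$ (with respect to $S$) and configurations seen in each unicyclic region $G_{i}$ (with respect to the regional set $S_{i}$). Since the genuine combinatorial work has already been carried out---the unicyclic characterization in Theorem \ref{Lemma_configurations} together with the necessary and sufficient conditions in Lemmas \ref{Lemma_regionGenerator}, \ref{Lemma_incidence}, and \ref{Lemma_sufficient}---the proof reduces to assembling these pieces in the correct order. I would treat the vertex and edge cases in parallel, since they differ only in which triple of configurations is forbidden ($\mathcal{A},\mathcal{B},\mathcal{C}$ versus $\mathcal{A},\mathcal{D},\mathcal{E}$) and in whether the relevant incidence is vertex- or edge-critical.

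For the forward direction, I would assume that $S$ is a vertex (resp. an edge) metric generator in $G$. Lemma \ref{Lemma_regionGenerator} then yields that each regional set $S_{i}$ is a metric generator in the unicyclic region $G_{i}$. Applying Theorem \ref{Lemma_configurations} to the unicyclic graph $G_{i}$ gives that $C_{i}$ contains none of $\mathcal{A},\mathcal{B},\mathcal{C}$ (resp. $\mathcal{A},\mathcal{D},\mathcal{E}$) with respect to $S_{i}$ in $G_{i}$; by Lemma \ref{Obs_RegionConfiguration} this is precisely the assertion that $C_{i}$ contains none of these configurations with respect to $S$ in $G$. Finally, Lemma \ref{Lemma_incidence} supplies the absence of vertex-critically (resp. edge-critically) incident cycles, so both required conditions hold.

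For the converse, I would assume that the configuration and incidence conditions hold. Lemma \ref{Obs_RegionConfiguration} first transfers the configuration hypothesis from $G$ to each region $G_{i}$, so that $C_{i}$ contains none of the relevant configurations with respect to $S_{i}$ in $G_{i}$. As $G_{i}$ is unicyclic, Theorem \ref{Lemma_configurations} then certifies that each $S_{i}$ is a metric generator in $G_{i}$. Combined with the hypothesis that no two cycles are vertex-critically (resp. edge-critically) incident, the sufficiency Lemma \ref{Lemma_sufficient} concludes that $S$ is a metric generator in $G$.

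I expect no serious obstacle here, since every nontrivial step has been isolated into a prior lemma. The only point demanding care is the consistent use of Lemma \ref{Obs_RegionConfiguration} in both directions, ensuring that the configuration condition is read correctly---\emph{with respect to} $S$ \emph{in} $G$ in the theorem statement, versus \emph{with respect to} $S_{i}$ \emph{in} $G_{i}$ as needed to invoke the unicyclic Theorem \ref{Lemma_configurations}. One should likewise keep the parenthetical vertex and edge cases aligned throughout, matching $\mathcal{A},\mathcal{B},\mathcal{C}$ with vertex-critical incidence and $\mathcal{A},\mathcal{D},\mathcal{E}$ with edge-critical incidence.
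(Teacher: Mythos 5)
Your proposal is correct and follows essentially the same route as the paper: necessity via Lemma \ref{Lemma_regionGenerator}, Theorem \ref{Lemma_configurations} and Lemma \ref{Lemma_incidence}, and sufficiency via Lemma \ref{Obs_RegionConfiguration}, Theorem \ref{Lemma_configurations} and Lemma \ref{Lemma_sufficient}. In fact, you are slightly more explicit than the paper in spelling out both invocations of Lemma \ref{Obs_RegionConfiguration} as the bridge between configurations with respect to $S$ in $G$ and with respect to $S_{i}$ in $G_{i}$.
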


\begin{proof}
Let $S$ be a vertex (resp. an edge) metric generator in $G$. Then Lemma
\ref{Lemma_regionGenerator} implies that $S_{i}$ is a vertex (resp. edge)
metric generator in the unicyclic region $G_{i}$ and the Theorem
\ref{Lemma_configurations} further implies that every cycle does not contain
any of the configurations $\mathcal{A}$, $\mathcal{B}$, and $\mathcal{C}$
(resp. $\mathcal{A}$, $\mathcal{D}$, and $\mathcal{E}$) with respect to $S.$
Also, Lemma \ref{Lemma_incidence} implies that there are no vertex-critically
(resp. edge-critically) incident cycles in $G$ with respect to $S$.

The other direction is the consequence of Lemma \ref{Lemma_sufficient} and
Theorem \ref{Lemma_configurations}.
\end{proof}

\section{Metric dimensions in cacti}

\begin{figure}[h]
\begin{center}
\includegraphics[scale=0.8]{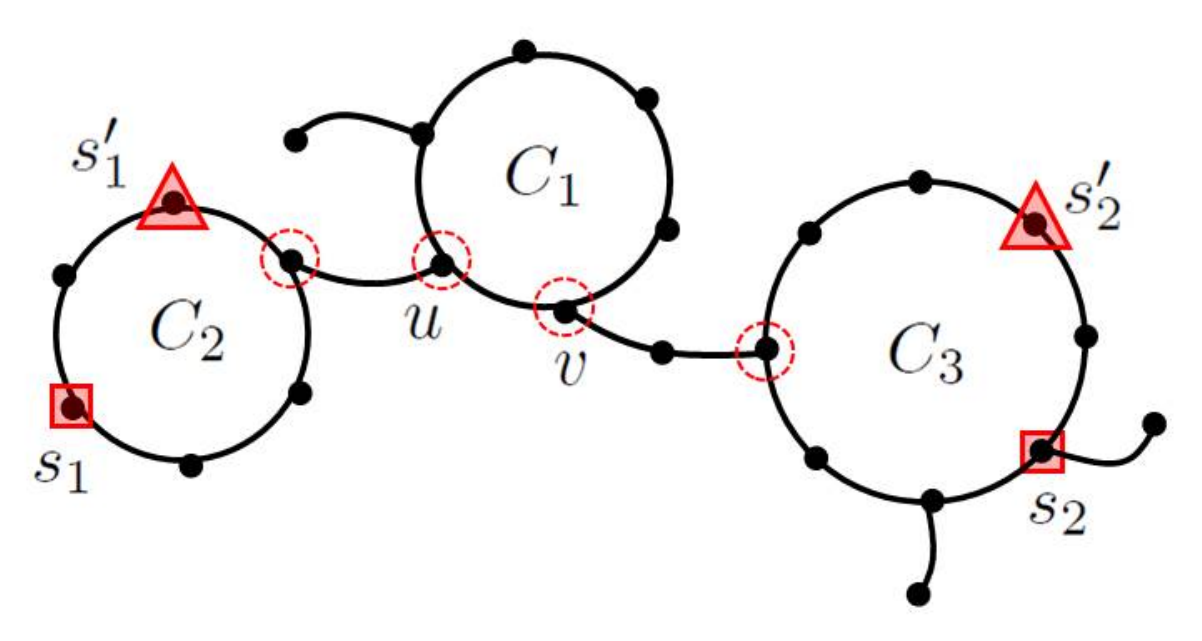}
\end{center}
\caption{A cactus graph with three cycles and two different smallest biactive
branch-resolving sets $S=\{s_{1},s_{2}\}$ and $S^{\prime}=\{s_{1}^{\prime
},s_{2}^{\prime}\}.$ With respect to $S$ the cycle $C_{2}$ contains
configuration $\mathcal{A}$ and $C_{3}$ configurations $\mathcal{B},$
$\mathcal{C}$ and $\mathcal{D}.$ With respect to $S^{\prime}$ cycles $C_{2}$
and $C_{3}$ contain none of the five configurations. The cycle $C_{1}$ has
$b(C_{1})=2,$ so the set of $S$-active vertices $\{u,v\}$ on $C_{1}$ is the
same for all smallest biactive branch-resolving sets and $C_{1}$ contains
configurations $\mathcal{B}$ and $\mathcal{D}$ with respect to all of them,
including $S$ and $S^{\prime}$ shown in the figure. Therefore, in this graph
cycle $C_{1}$ is both $\mathcal{ABC}$- and $\mathcal{ADE}$-positive, and
cycles $C_{2}$ and $C_{3}$ are both $\mathcal{ABC}$- and $\mathcal{ADE}%
$-negative.}%
\label{Fig_avoiding}%
\end{figure}Let $G$ be a cactus graph and $S$ a smallest biactive
branch-resolving set in $G.$ Then
\[
\left\vert S\right\vert =L(G)+B(G),
\]
where $B(G)=\sum_{i=1}^{c}\max\{0,2-b(C_{i})\}.$ If $b(C_{i})\geq2$, then the
set of $S$-active vertices on $C_{i}$ is the same for all smallest biactive
branch-resolving sets $S$. The set of $S$-active vertices may differ only on
cycles $C_{i}$ with $b(C_{i})<2.$ Therefore, such a cycle $C_{i}$ may contain
one of the configurations with respect to one smallest biactive
branch-resolving set, but not with respect to another. This is illustrated by
Figure \ref{Fig_avoiding}.

\begin{definition}
We say that a cycle $C_{i}$ from a cactus graph $G$ is $\mathcal{ABC}%
$\emph{-negative} (resp. $\mathcal{ADE}$\emph{-negative}), if there exists a
smallest biactive branch-resolving set $S$ in $G$ such that $C_{i}$ does not
contain any of the configurations $\mathcal{A},$ $\mathcal{B},$ and
$\mathcal{C}$ (resp. $\mathcal{A},$ $\mathcal{D},$ and $\mathcal{E}$) with
respect to $S.$ Otherwise, we say that $C_{i}$ is $\mathcal{ABC}%
$\emph{-positive} (resp. $\mathcal{ADE}$\emph{-positive}). The number of
$\mathcal{ABC}$-positive (resp. $\mathcal{ADE}$-positive) cycles in $G$ is
denoted by $c_{\mathcal{ABC}}(G)$ (resp. $c_{\mathcal{ADE}}(G)$).
\end{definition}

For two distinct smallest biactive branch-resolving sets $S$, the set of
$S$-active vertices may differ only on cycles with $b(C_{i})\leq1.$ Let
$C_{i}$ and $C_{j}$ be two such cycles in $G$ and notice that the choice of
the vertices included in $S$ from the region of $C_{i}$ is independent of the
choice from $C_{j}.$ Therefore, there exists at least one smallest biactive
branch-resolving set $S$ such that every $\mathcal{ABC}$-negative (resp.
$\mathcal{ADE}$-negative) avoids the three configurations with respect to $S.$
Notice that there may exist more than one such set $S,$ and in that case they
all have the same size, so among them we may choose the one with the smallest
number of vertex-critical (resp. edge-critical) incidencies. Therefore, we say
that a smallest biactive branch-resolving set $S$ is \emph{nice} if every
$\mathcal{ABC}$-negative (resp. $\mathcal{ADE}$-negative) cycle $C_{i}$ does
not contain the three configurations with respect to $S$ and the number of
vertex-critically (resp. edge-critically) incident pairs of cycles with
respect to $S$ is the smallest possible. The niceness of a smallest biactive
branch-resolving set is illustrated by Figure \ref{Fig_optimality}.

\begin{figure}[h]
\begin{center}
\includegraphics[scale=0.8]{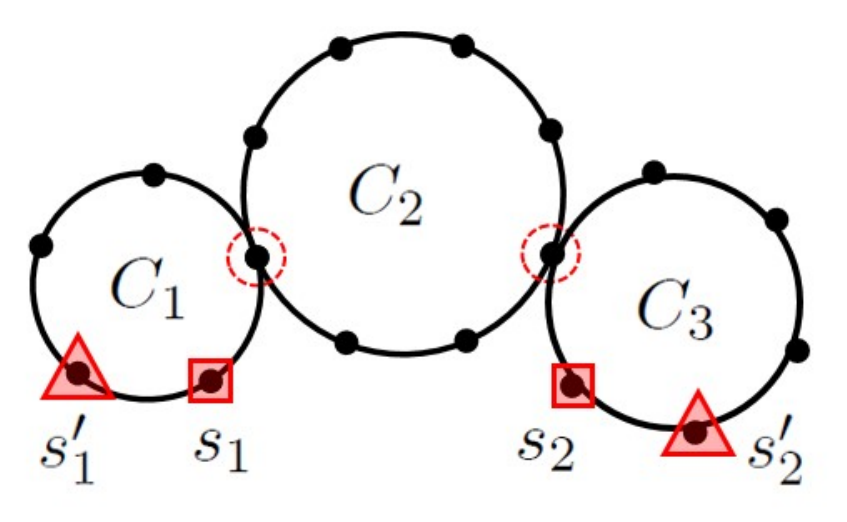}
\end{center}
\caption{A cactus graph with three cycles and two different smallest biactive
branch-resolving sets $S=\{s_{1},s_{2}\}$ and $S^{\prime}=\{s_{1}^{\prime
},s_{2}^{\prime}\}.$ With respect to both $S$ and $S^{\prime}$ all three
cycles do not contain any of the five configurations. The difference is that
with respect to $S$ the cycle $C_{2}$ is both vertex- and edge-critically
incident with both $C_{1}$ and $C_{3}$, and with respect to $S^{\prime}$ the
cycle $C_{2}$ is both vertex- and edge-critically incident with $C_{3}$, but
only edge-critically incident with $C_{1}$. Therefore, the set $S$ is not
nice. Since the critical incidences of $S^{\prime}$ cannot be further avoided
the set $S^{\prime}$ is nice.}%
\label{Fig_optimality}%
\end{figure}

A set $S\subseteq V(G)$ is a \emph{vertex cover} if it contains a least one
end-vertex of every edge in $G.$ The cardinality of a smallest vertex cover in
$G$ is the \emph{vertex cover number} denoted by $\tau(G).$ Now, let $G$ be a
cactus graph and let $S$ be a nice smallest biactive branch-resolving set in
$G.$ We define the \emph{vertex-incident graph} $G_{vi}$ (resp.
\emph{edge-incident graph} $G_{ei}$) as a graph containing a vertex for every
cycle in $G,$ where two vertices are adjacent if the corresponding cycles in
$G$ are $\mathcal{ABC}$-negative and vertex-critically incident (resp.
$\mathcal{ADE}$-negative and edge-critically incident) with respect to $S$.
For example, if we consider the cactus graph $G$ from Figure \ref{Fig_example}%
, then $V(G_{vi})=V(G_{ei})=\{C_{i}:i=1,\ldots,7\},$ where $E(G_{vi}%
)=\{C_{3}C_{4},C_{4}C_{5}\}$ and $E(G_{ei})=\{C_{2}C_{3},C_{3}C_{4},C_{4}%
C_{5}\}.$

We are now in a position to establish the following theorem which gives us the
value of the vertex and the edge metric dimensions in a cactus graph.

\begin{theorem}
\label{Tm_dim}Let $G$ be a cactus graph. Then
\[
\mathrm{dim}(G)=L(G)+B(G)+c_{\mathcal{ABC}}(G)+\tau(G_{vi}),
\]
and
\[
\mathrm{edim}(G)=L(G)+B(G)+c_{\mathcal{ADE}}(G)+\tau(G_{ei}).
\]

\end{theorem}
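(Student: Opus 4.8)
The plan is to prove the identity for $\mathrm{dim}(G)$; the identity for $\mathrm{edim}(G)$ follows by the same argument with $\mathcal{A},\mathcal{B},\mathcal{C}$ replaced by $\mathcal{A},\mathcal{D},\mathcal{E}$, ``vertex-critical'' replaced by ``edge-critical'', and $G_{vi}$ by $G_{ei}$. Throughout I would lean on the characterization in Theorem \ref{Cor_generatorCharacterization}: a biactive branch-resolving set $S$ is a vertex metric generator precisely when no cycle carries one of $\mathcal{A},\mathcal{B},\mathcal{C}$ and no two cycles are vertex-critically incident with respect to $S$. I would also record one technical fact used repeatedly: if a cycle $C_i$ carries a geodesic triple of $S$-active vertices, then any $S$-path $P_i$ contains that triple, and since each arc of a geodesic triple has length at most $\lfloor g_i/2\rfloor$, one gets $|P_i|\geq\lceil g_i/2\rceil$; hence $|P_i|$ exceeds both $\lfloor g_i/2\rfloor-1$ and $\lceil g_i/2\rceil-1$, so no end-vertex of $P_i$ is vertex- or edge-critical, and by Remark \ref{Obs_geodTriple} the cycle carries none of the five configurations. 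Thus placing a single extra vertex on a cycle so as to complete a geodesic triple simultaneously destroys every configuration on that cycle and every critical incidence through it.

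For the upper bound I would start from a nice smallest biactive branch-resolving set $S_{0}$, of size $L(G)+B(G)$. For each of the $c_{\mathcal{ABC}}(G)$ many $\mathcal{ABC}$-positive cycles I add one vertex, placed on the cycle itself at a vertex not shared with another cycle so as to complete a geodesic triple together with the two extreme $S_{0}$-active vertices; such a position always exists. Next I take a minimum vertex cover $W$ of $G_{vi}$ and, for each cycle in $W$, add one further vertex completing a geodesic triple on it. The resulting set $S^{\ast}$ has size $L(G)+B(G)+c_{\mathcal{ABC}}(G)+\tau(G_{vi})$, and I verify it is a generator via Theorem \ref{Cor_generatorCharacterization}. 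No cycle carries a configuration: the $\mathcal{ABC}$-negative cycles are configuration-free with respect to $S_{0}$ by niceness, the positive cycles now carry a geodesic triple, and no negative cycle acquires one. There are no vertex-critical incidences: every positive cycle and every cycle of $W$ is now non-critical, and since $W$ covers $G_{vi}$, each edge of $G_{vi}$ has a de-criticalized endpoint. The point needing care is that an added vertex must not spoil another cycle: a vertex placed on $C_{i}$ is active only on $C_{i}$, because for any other cycle $C_{j}$ the path from this vertex to $C_{j}$ enters $C_{j}$ at a vertex that, by biactivity of $S_{0}$, is already $S_{0}$-active; hence the $S$-active set of every cycle other than $C_{i}$ is unchanged.

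For the lower bound let $S$ be any vertex metric generator. By Lemma \ref{Lemma_biactive_branchResolving} it is biactive branch-resolving, so $|S|\geq L(G)+B(G)$, and by Theorem \ref{Cor_generatorCharacterization} it carries no configuration and no vertex-critical incidence. I would then exhibit $c_{\mathcal{ABC}}(G)+\tau(G_{vi})$ distinct surplus vertices of $S$, charging as follows. For each $\mathcal{ABC}$-positive cycle $C_{i}$, every active set coming from a smallest biactive branch-resolving set produces a configuration on $C_{i}$; since $S$ is configuration-free on $C_{i}$, it must contain at least one $S$-active vertex on $C_{i}$ beyond those of a smallest biactive set, and I charge one such vertex to $C_{i}$. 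For the incidences, among the $\mathcal{ABC}$-negative cycles each edge of $G_{vi}$ represents a pair that would be vertex-critically incident for a nice choice of active sets; as $S$ has no such incidence, the cycles on which $S$ departs from a nice choice so as to break these edges form a vertex cover of $G_{vi}$, contributing at least $\tau(G_{vi})$ further surplus vertices. Since the positive cycles and the cover cycles are disjoint families, this gives $|S|\geq L(G)+B(G)+c_{\mathcal{ABC}}(G)+\tau(G_{vi})$.

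The main obstacle is the bookkeeping in the lower bound: I must guarantee that the surplus vertices charged to distinct cycles are genuinely distinct and are not already consumed by the branch-resolving or biactivity requirements. This is delicate because unicyclic regions need not be vertex disjoint, so a single vertex of $S$ may be active on two cycles at once; I would resolve it by assigning each surplus vertex to at most one cycle (say the one in whose region it lies and to which it is closest) and by using that a minimum vertex cover of $G_{vi}$ meets only $\mathcal{ABC}$-negative cycles. A secondary subtlety is that $G_{vi}$ is defined through a fixed nice set while $S$ is arbitrary; here I would use that on every cycle with $b(C_{i})\geq2$ the $S$-active set, and hence its critical status, is forced, whereas on cycles with $b(C_{i})\leq1$ the niceness of $S_{0}$ already minimizes the number of critical incidences, so $S$ cannot break the edges of $G_{vi}$ with fewer than $\tau(G_{vi})$ surplus vertices.
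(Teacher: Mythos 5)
Your proposal is correct and follows essentially the same route as the paper: both arguments rest on the characterization in Theorem \ref{Cor_generatorCharacterization}, the geodesic-triple trick (Observation \ref{Obs_geodTriple}, together with the fact that a geodesic triple forces $\left\vert P_{i}\right\vert \geq\left\lceil g_{i}/2\right\rceil$ and hence kills criticality), niceness of the base set, and the accounting $L(G)+B(G)+c_{\mathcal{ABC}}(G)+\tau(G_{vi})$. The only difference is organizational---you split the equality into an explicit upper-bound construction plus a charging lower bound, whereas the paper decomposes a smallest generator as $S_{1}\cup S_{2}\cup S_{3}\cup S_{4}$ with the construction left implicit in the minimality step---and the bookkeeping subtleties you flag (distinctness of the charged surplus vertices, dependence of $G_{vi}$ on the chosen nice set) are handled at the same informal level in the paper's own proof.
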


\begin{proof}
If there is a cycle in $G$ with $b(C)=0,$ then $G$ is a unicyclic graph. For
unicyclic graphs with $b(C)=0$ we have $B(G)=2$ and if the three
configurations $\mathcal{A}$, $\mathcal{B}$, $\mathcal{C}$ (resp.
$\mathcal{A}$, $\mathcal{D}$, $\mathcal{E}$) cannot be avoided by choosing two
vertices into $S,$ then $c_{\mathcal{ABC}}(G)=1$ (resp. $c_{\mathcal{ADE}%
}(G)=1$) and also the third vertex must be introduced to $S,$ so the claim
holds. In all other situations $B(G)$ equals the number of cycles in $G$ with
$b(C_{i})=1.$

Let $S$ be a smallest vertex (resp. edge) metric generator in $G$. Due to
Lemma \ref{Lemma_biactive_branchResolving} the set $S$ must be
branch-resolving. Let $S_{1}\subseteq S$ be a smallest branch-resolving set
contained in $S,$ so $\left\vert S_{1}\right\vert =L(G).$ Since according to
Lemma \ref{Lemma_biactive_branchResolving} the set $S$ must also be biactive,
let $S_{2}\subseteq S\backslash S_{1}$ be a smallest set such that $S_{1}\cup
S_{2}$ is biactive. Obviously, $S_{1}\cap S_{2}=\phi$ and $\left\vert
S_{2}\right\vert =B(G).$

Since $S_{1}\cup S_{2}$ is a smallest biactive branch-resolving set in $G,$ it
follows that every $\mathcal{ABC}$-positive (resp. $\mathcal{ADE}$-positive)
cycle in $G$ contains at least one of the three configurations with respect to
$S_{1}\cup S_{2},$ so according to Theorem \ref{Cor_generatorCharacterization}
the set $S_{1}\cup S_{2}$ is not a vertex (resp. an edge) metric generator in
$G.$ Therefore, each $\mathcal{ABC}$-positive (resp. $\mathcal{ADE}$-positive)
cycle $C_{i}$ must contain a vertex $s_{i}\in S\backslash(S_{1}\cup S_{2}),$
where we may assume that $s_{i}$ is chosen so that it forms a geodesic triple
on $C_{i}$ with vertices from $S_{1}\cup S_{2},$ so according to Observation
\ref{Obs_geodTriple} the cycle $C_{i}$ will not contain any of the
configurations with respect to $S.$ Denote by $S_{3}$ the set of vertices
$s_{i}$ from every $\mathcal{ABC}$-positive (resp. $\mathcal{ADE}$-positive)
cycle in $G.$ Obviously, $S_{3}\subseteq S,$ $S_{1}\cap S_{2}\cap S_{3}=\phi$
and $\left\vert S_{3}\right\vert =c_{\mathcal{ABC}}(G)$ (resp. $\left\vert
S_{3}\right\vert =c_{\mathcal{ADE}}(G)$).

Notice that $S_{1}\cup S_{2}\cup S_{3}$ is a biactive branch-resolving set in
$G$ such that every cycle $C_{i}$ in $G$ does not contain any of the
configurations $\mathcal{A},$ $\mathcal{B},$ $\mathcal{C}$ (resp.
$\mathcal{A},$ $\mathcal{D},$ $\mathcal{E}$) with respect to it. Notice that
$S_{1}\cup S_{2}\cup S_{3}$ still may not be a vertex (resp. an edge) metric
generator, as there may exist vertex-critically (resp. edge-critically)
incident cycles in $G$ with respect to $S_{1}\cup S_{2}\cup S_{3}.$ Since $S$
is a smallest vertex (resp. edge) metric generator, we may assume that $S_{2}$
is chosen so that a smallest biactive branch-resolving set $S_{1}\cup S_{2}$
is nice. Therefore, the graph $G_{vi}$ (resp. $G_{ei}$) contains an edge for
every pair of cycles in $G$ which are $\mathcal{ABC}$-negative and
vertex-critically incident (resp. $\mathcal{ADE}$-negative and edge-critically
incident) with respect to $S_{1}\cup S_{2}.$ Let us denote $S_{4}%
=S\backslash(S_{1}\cup S_{2}\cup S_{3}).$ For each edge $xy$ in $G_{vi}$
(resp. $G_{ei}$) the set $S_{4}$ must contain a vertex from $C_{x}$ or
$C_{y},$ chosen so that it forms a geodesic triple of $S$-active vertices on
$C_{x}$ or $C_{y}$ with other vertices from $S.$ Therefore, $S_{4}$ must
contain at least $\tau(G_{vi})$ (resp. $\tau(G_{ei})$) vertices in order for
$S$ to be a vertex (resp. an edge) metric generator. Since $S$ is a smallest
vertex (resp. edge) metric generator, it must hold $\left\vert S_{4}%
\right\vert =\tau(G_{vi})$ (resp. $\left\vert S_{4}\right\vert =\tau(G_{ei})$).

We have established that $S=S_{1}\cup S_{2}\cup S_{3}\cup S_{4},$ where
$S_{1}\cap S_{2}\cap S_{3}\cap S_{4},$ so $\left\vert S\right\vert =\left\vert
S_{1}\right\vert +\left\vert S_{2}\right\vert +\left\vert S_{3}\right\vert
+\left\vert S_{4}\right\vert .$ Since we also established $\left\vert
S_{1}\right\vert =L(G),$ $\left\vert S_{2}\right\vert =B(G),$ $\left\vert
S_{3}\right\vert =c_{\mathcal{ABC}}(G)$ (resp. $\left\vert S_{3}\right\vert
=c_{\mathcal{ADE}}(G)$) and $\left\vert S_{4}\right\vert =\tau(G_{vi})$ (resp.
$\left\vert S_{4}\right\vert =\tau(G_{ei})$), the proof is finished.
\end{proof}

The formulas for the calculation of metric dimensions from the above theorem
are illustrated by the following examples.

\begin{example}
Let us consider the cactus graph $G$ from Figure \ref{Fig_avoiding}. The set
$S^{\prime}=\{s_{1}^{\prime},s_{2}^{\prime}\}$ is an optimal smallest biactive
branch-resolving set in $G.$ But, since $C_{1}$ is both $\mathcal{ABC}$- and
$\mathcal{ADE}$-positive, the set $S^{\prime}$ is neither a vertex nor an edge
metric generator. Let $s_{3}^{\prime}$ be any vertex from $C_{1}$ which forms
a geodesic triple with two $S^{\prime}$-active vertices on $C_{1}.$ Then the
set $S=\{s_{1}^{\prime},s_{2}^{\prime},s_{3}^{\prime}\}$ is a smallest vertex
(resp. edge) metric generator, so we obtain
\[
\mathrm{dim}(G)=L(G)+B(G)+c_{\mathcal{ABC}}(G)+\tau(G_{vi})=0+2+1+0=3.
\]
and%
\[
\mathrm{edim}(G)=L(G)+B(G)+c_{\mathcal{ADE}}(G)+\tau(G_{ei})=0+2+1+0=3.
\]

\end{example}

\begin{figure}[h]
\begin{center}
\includegraphics[scale=0.9]{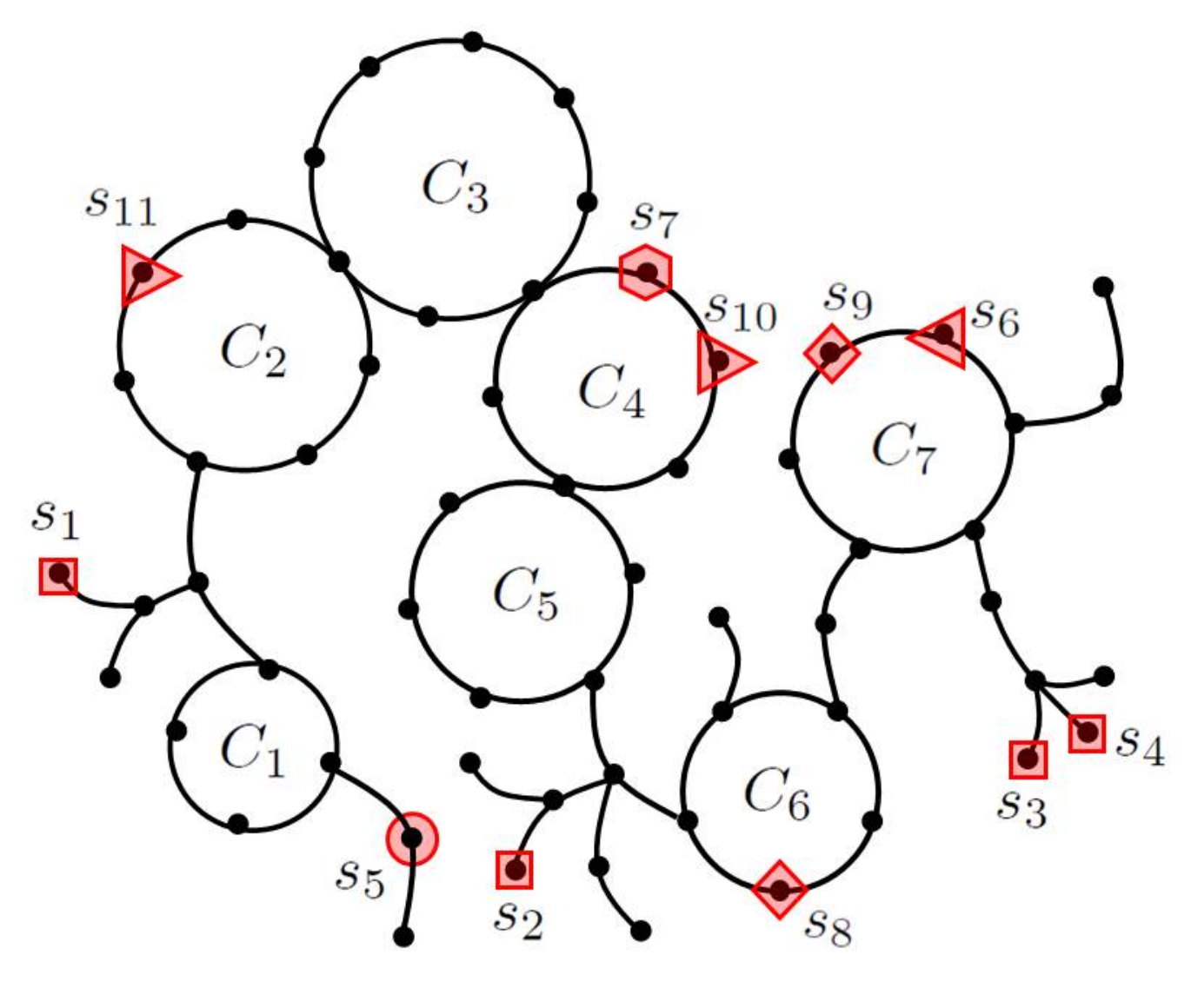}
\end{center}
\caption{A cactus graph $G$ from Example \ref{Example_calc}.}%
\label{Fig_example}%
\end{figure}

Let us now give an example of determining the vertex and the edge metric
dimensions on a cactus graph which is a bit bigger.

\begin{example}
\label{Example_calc}Let $G$ be the cactus graph from Figure \ref{Fig_example}.
The following table gives the choice and the number of vertices for every
expression in the formulas for metric dimensions from Theorem \ref{Tm_dim}
\[%
\begin{tabular}
[c]{|l||l|l|}\hline
& vertices & value\\\hline\hline
$L(G)$ & $s_{1},s_{2},s_{3},s_{4}$ & $4$\\\hline
$B(G)$ & $s_{5}$ & $1$\\\hline
$c_{\mathcal{ABC}}(G)$ & $s_{6}$ & $1$\\\hline
$\tau(G_{vi})$ & $s_{7}$ & $1$\\\hline
$c_{\mathcal{ADE}}(G)$ & $s_{8},s_{9}$ & $2$\\\hline
$\tau(G_{ei})$ & $s_{10},s_{11}$ & $2$\\\hline
\end{tabular}
\ \ \
\]
Therefore, the set $S=\{s_{1},s_{2},s_{3},s_{4},s_{5},s_{6},s_{7}\}$ is a
smallest vertex metric generator, so we obtain
\[
\mathrm{dim}(G)=L(G)+B(G)+c_{\mathcal{ABC}}(G)+\tau(G_{vi})=4+1+1+1=7.
\]
On the other hand, the set $S=\{s_{1},s_{2},s_{3},s_{4},s_{5},s_{8}%
,s_{9},s_{10},s_{11}\}$ is a smallest edge metric generator, so we have%
\[
\mathrm{edim}(G)=L(G)+B(G)+c_{\mathcal{ADE}}(G)+\tau(G_{ei})=4+1+2+2=9.
\]

\end{example}

Notice that $c_{\mathcal{ABC}}(G)\leq c.$ Also, if $\tau(G_{vi})\geq1$ then
$c_{\mathcal{ABC}}(G)+\tau(G_{vi})<c.$ The similar holds for $c_{\mathcal{ADE}%
}(G)$ and $\tau(G_{ei}).$ From this and Theorem \ref{Tm_dim} we immediately
obtain the following result.

\begin{corollary}
\label{Cor_boundB}Let $G$ be a cactus graph with $c$ cycles. Then
$\mathrm{dim}(G)\leq L(G)+B(G)+c$ and $\mathrm{edim}(G)\leq L(G)+B(G)+c.$
\end{corollary}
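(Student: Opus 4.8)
The plan is to read off the bound directly from the exact formula in Theorem \ref{Tm_dim}, so that the whole task reduces to two numerical inequalities: $c_{\mathcal{ABC}}(G)+\tau(G_{vi})\leq c$ in the vertex case and $c_{\mathcal{ADE}}(G)+\tau(G_{ei})\leq c$ in the edge case. Since by definition both incident graphs carry exactly one vertex per cycle of $G$, each has precisely $c$ vertices, so it suffices to compare the relevant cycle count against this total number of vertices.

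First I would record the structural observation that is built into the definition of $G_{vi}$: an edge $C_xC_y$ is present only when both $C_x$ and $C_y$ are $\mathcal{ABC}$-negative and vertex-critically incident. Consequently every $\mathcal{ABC}$-positive cycle is an isolated vertex of $G_{vi}$, and the $c_{\mathcal{ABC}}(G)$ vertices arising from these cycles form an independent set. I would then apply the Gallai identity $\tau(G_{vi})=|V(G_{vi})|-\alpha(G_{vi})=c-\alpha(G_{vi})$ together with $\alpha(G_{vi})\geq c_{\mathcal{ABC}}(G)$ to conclude $\tau(G_{vi})\leq c-c_{\mathcal{ABC}}(G)$, that is, $c_{\mathcal{ABC}}(G)+\tau(G_{vi})\leq c$. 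Substituting this into the formula for $\mathrm{dim}(G)$ from Theorem \ref{Tm_dim} immediately gives $\mathrm{dim}(G)\leq L(G)+B(G)+c$.

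The edge case is entirely analogous: edges of $G_{ei}$ join only $\mathcal{ADE}$-negative cycles, so the $\mathcal{ADE}$-positive cycles are again isolated, whence $\alpha(G_{ei})\geq c_{\mathcal{ADE}}(G)$ and $c_{\mathcal{ADE}}(G)+\tau(G_{ei})\leq c$, yielding $\mathrm{edim}(G)\leq L(G)+B(G)+c$.

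This argument is short and presents no real obstacle; the only point requiring care is the structural observation that positive cycles contribute isolated vertices to the incident graph, which is precisely where the definition of $G_{vi}$ (resp. $G_{ei}$) enters. If one prefers to avoid invoking Gallai's identity, the same bound follows from the elementary remark that a minimum vertex cover never needs to include an isolated vertex, so $\tau(G_{vi})$ is at most the number of non-isolated vertices of $G_{vi}$, and this number is at most $c-c_{\mathcal{ABC}}(G)$; this matches the case analysis on whether $\tau(G_{vi})$ is zero or positive already indicated in the text preceding the statement.
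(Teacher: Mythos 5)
Your proposal is correct and takes essentially the same route as the paper: both read the bound directly off the exact formula in Theorem \ref{Tm_dim} by establishing $c_{\mathcal{ABC}}(G)+\tau(G_{vi})\leq c$ and $c_{\mathcal{ADE}}(G)+\tau(G_{ei})\leq c$. The paper only asserts this inequality in a brief remark before the corollary (splitting on whether $\tau(G_{vi})$ is zero or positive), whereas you supply the justification it leaves implicit, namely that $\mathcal{ABC}$-positive (resp. $\mathcal{ADE}$-positive) cycles are isolated vertices of the incident graph, so Gallai's identity, or simply the fact that a minimum vertex cover needs no isolated vertices, yields the bound.
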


Further, notice that in a cactus graph with at least two cycles every cycle
has at least one branch-active vertex. Therefore, in such a cactus graph $G,$
we have $B(G)=\sum_{i=1}^{c}\max\{0,2-b(C_{i})\}\leq c$ with equality holding
only if $b(C_{i})=1$ for every cycle $C_{i}$ in $G$. Since $c_{\mathcal{ABC}%
}(G)+\tau(G_{vi})=c$ if and only if $c_{\mathcal{ABC}}(G)=c$ and $\tau
(G_{vi})=0,$ and similarly holds for the edge version of metric dimension,
Theorem \ref{Tm_dim} immediately implies the following simple upper bound on
the vertex and edge metric dimensions of a cactus graph $G$.

\begin{corollary}
Let $G$ be a cactus graph with $c\geq2$ cycles. Then
\[
\mathrm{dim}(G)\leq L(G)+2c
\]
with equality holding if and only if every cycle in $G$ is $\mathcal{ABC}%
$-positive and contains precisely one branch-active vertex.
\end{corollary}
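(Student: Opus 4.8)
The plan is to derive this corollary directly from Theorem \ref{Tm_dim} together with the two inequalities observed just before the statement, namely $B(G)=\sum_{i=1}^{c}\max\{0,2-b(C_{i})\}\leq c$ for a cactus with $c\geq 2$ cycles (since each cycle then has at least one branch-active vertex), and $c_{\mathcal{ABC}}(G)+\tau(G_{vi})\leq c$. Adding these two bounds to the common term $L(G)$ in the formula $\mathrm{dim}(G)=L(G)+B(G)+c_{\mathcal{ABC}}(G)+\tau(G_{vi})$ immediately yields $\mathrm{dim}(G)\leq L(G)+c+c=L(G)+2c$, which is the desired upper bound. So the inequality itself requires essentially no work beyond invoking the already-established facts.

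**The main content** is the equality characterization, which I would handle by analyzing when each of the two component inequalities is tight simultaneously. The first inequality $B(G)\leq c$ is an equality precisely when every summand $\max\{0,2-b(C_i)\}$ equals $1$, i.e. when $b(C_i)=1$ for every cycle $C_i$; here I would note that $b(C_i)=0$ is excluded in the $c\geq 2$ case (every cycle is branch-active), so the only way to hit the bound is $b(C_i)=1$ for all $i$. The second inequality $c_{\mathcal{ABC}}(G)+\tau(G_{vi})\leq c$ is an equality exactly when $c_{\mathcal{ABC}}(G)=c$ and $\tau(G_{vi})=0$, as the text already points out: if $\tau(G_{vi})\geq 1$ then strict inequality holds, and $c_{\mathcal{ABC}}(G)\leq c$ always. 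I would then argue that $c_{\mathcal{ABC}}(G)=c$ means every cycle is $\mathcal{ABC}$-positive, and that in this situation $G_{vi}$ has no edges (since its edges connect pairs of $\mathcal{ABC}$-negative cycles, and there are none), forcing $\tau(G_{vi})=0$ automatically. Thus the two equality conditions collapse to the single clean statement that every cycle is $\mathcal{ABC}$-positive and has exactly one branch-active vertex.

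**The one subtlety to verify** is that the equality condition is genuinely iff, not merely sufficient: I must check that the two inequalities being tight are not only necessary but that the stated graph-theoretic condition forces both to be tight. The forward direction (equality $\Rightarrow$ condition) follows since $L(G)+2c=L(G)+B(G)+c_{\mathcal{ABC}}(G)+\tau(G_{vi})$ forces $B(G)=c$ and $c_{\mathcal{ABC}}(G)+\tau(G_{vi})=c$ jointly, whence $b(C_i)=1$ for all $i$ and $c_{\mathcal{ABC}}(G)=c$. For the reverse direction, if every cycle is $\mathcal{ABC}$-positive with $b(C_i)=1$, then $B(G)=c$ and $c_{\mathcal{ABC}}(G)=c$; the point to confirm is $\tau(G_{vi})=0$, which holds because an $\mathcal{ABC}$-positive cycle is never an endpoint of an edge in $G_{vi}$ (edges require $\mathcal{ABC}$-negative cycles), so $G_{vi}$ is edgeless and its vertex cover number is zero. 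Plugging back gives $\mathrm{dim}(G)=L(G)+c+c+0=L(G)+2c$, closing the equivalence. I would expect no serious obstacle here; the work is entirely in correctly matching the tightness conditions of the two inequalities and verifying the $\tau(G_{vi})=0$ implication, all of which rest on definitions already in place.
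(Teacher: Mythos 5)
Your proposal is correct and matches the paper's own (implicit) argument: the paper derives this corollary directly from Theorem \ref{Tm_dim} together with the two observations stated just before it, namely that $B(G)\leq c$ with equality exactly when $b(C_i)=1$ for every cycle, and that $c_{\mathcal{ABC}}(G)+\tau(G_{vi})=c$ if and only if $c_{\mathcal{ABC}}(G)=c$ and $\tau(G_{vi})=0$. Your only addition is spelling out the tightness analysis (including that an all-$\mathcal{ABC}$-positive graph forces $G_{vi}$ to be edgeless), which the paper leaves to the reader.
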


\begin{corollary}
Let $G$ be a cactus graph with $c\geq2$ cycles. Then
\[
\mathrm{edim}(G)\leq L(G)+2c
\]
with equality holding if and only if every cycle in $G$ is $\mathcal{ADE}%
$-positive and contains precisely one branch-active vertex.
\end{corollary}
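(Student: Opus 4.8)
The plan is to substitute the edge-dimension formula from Theorem~\ref{Tm_dim} and then bound each of its two nonconstant summands by $c$. By Theorem~\ref{Tm_dim},
\[
\mathrm{edim}(G)=L(G)+B(G)+c_{\mathcal{ADE}}(G)+\tau(G_{ei}),
\]
so it suffices to prove $B(G)\le c$ and $c_{\mathcal{ADE}}(G)+\tau(G_{ei})\le c$, and then to determine exactly when both become equalities. This is the precise edge-analogue of the preceding corollary, with the configurations $\mathcal{A},\mathcal{D},\mathcal{E}$ and the graph $G_{ei}$ playing the roles of $\mathcal{A},\mathcal{B},\mathcal{C}$ and $G_{vi}$.

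For the first bound I would use that a cactus with $c\ge 2$ cycles has at least one branch-active vertex on every cycle, so $b(C_i)\ge 1$ for all $i$. Hence each term $\max\{0,2-b(C_i)\}$ in $B(G)$ is at most $1$, giving $B(G)\le c$, with equality precisely when $b(C_i)=1$ for every cycle $C_i$. For the second bound I would invoke the observations recorded just before Corollary~\ref{Cor_boundB}, namely $c_{\mathcal{ADE}}(G)\le c$ together with the fact that $\tau(G_{ei})\ge 1$ forces $c_{\mathcal{ADE}}(G)+\tau(G_{ei})<c$. The reason is structural: every edge of $G_{ei}$ joins two $\mathcal{ADE}$-negative cycles, so the $\mathcal{ADE}$-negative cycles form a vertex cover and $\tau(G_{ei})$ is at most their number $N=c-c_{\mathcal{ADE}}(G)$; moreover, deleting one endpoint of any edge still yields a cover, so whenever $N\ge 1$ one has $\tau(G_{ei})<N$ and hence $c_{\mathcal{ADE}}(G)+\tau(G_{ei})<c$. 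Consequently $c_{\mathcal{ADE}}(G)+\tau(G_{ei})=c$ holds if and only if $N=0$, i.e. if and only if every cycle is $\mathcal{ADE}$-positive (in which case $G_{ei}$ is edgeless and $\tau(G_{ei})=0$ automatically).

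Adding the two bounds yields $\mathrm{edim}(G)\le L(G)+2c$. Equality in the sum requires equality in both bounds simultaneously, which is exactly the conjunction that every cycle carries precisely one branch-active vertex and that every cycle is $\mathcal{ADE}$-positive; this is the claimed characterization. I expect the only delicate point to be the equality analysis of the second bound: one must rule out the possibility that $c_{\mathcal{ADE}}(G)+\tau(G_{ei})=c$ while some $\mathcal{ADE}$-negative cycle is present, and the strict inequality $\tau(G_{ei})<N$ for $N\ge 1$ (which rests on the fact that edges of $G_{ei}$ only link negative cycles) is the crux. The remainder is a direct substitution into Theorem~\ref{Tm_dim}.
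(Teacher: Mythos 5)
Your proposal is correct and follows essentially the same route as the paper: substitute the formula of Theorem~\ref{Tm_dim}, bound $B(G)\le c$ using that every cycle of a cactus with $c\ge 2$ cycles has a branch-active vertex, and bound $c_{\mathcal{ADE}}(G)+\tau(G_{ei})\le c$ with equality forcing $c_{\mathcal{ADE}}(G)=c$ and $\tau(G_{ei})=0$. The only difference is that you spell out the vertex-cover argument (the $\mathcal{ADE}$-negative cycles cover all edges of $G_{ei}$, and removing one vertex from that cover still covers) behind the strict inequality when $\tau(G_{ei})\ge 1$, which the paper merely asserts in the remarks preceding Corollary~\ref{Cor_boundB}.
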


Notice that the upper bound from the above corollary may not hold for $c=1$,
i.e. for unicyclic graphs, as for the cycle $C$ of unicyclic graph it may hold
that $b(C)=0.$ As for the tightness of these bounds, we have the following proposition.

\begin{proposition}
For every pair of integers $b\geq0$ and $c\geq2,$ there is a cactus graph $G$
with $c$ cycles such that $L(G)=b$ and $\mathrm{dim}(G)=\mathrm{edim}%
(G)=L(G)+2c.$
\end{proposition}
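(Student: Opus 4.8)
The goal is to construct, for each pair $b\geq 0$ and $c\geq 2$, a single cactus graph $G$ with exactly $c$ cycles and $L(G)=b$ whose vertex and edge metric dimensions both equal $L(G)+2c$. By the two corollaries immediately preceding the proposition, the extremal value $L(G)+2c$ is achieved precisely when every cycle has exactly one branch-active vertex and every cycle is simultaneously $\mathcal{ABC}$-positive and $\mathcal{ADE}$-positive. So the plan is to engineer a graph meeting both conditions, and then simply quote those two corollaries rather than recomputing the dimension from scratch.

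**Key steps.**
First I would handle the cycle structure. I want $c$ cycles, each with $b(C_i)=1$, so I will attach all $c$ cycles in a way that gives each cycle a single branch-active vertex through which the rest of the graph hangs—concretely, arrange the cycles as pendant cycles off a common backbone (a path or star), each joined at one vertex, so that every cycle's unique branch-active vertex is its attachment point and $b(C_i)=1$ for all $i$. The natural choice is to make every cycle a short \emph{even} cycle (say a $4$-cycle), because for $b(C_i)=1$ there are only two $S$-active vertices, hence no geodesic triple, and on an even cycle with $a_S(C_i)=2$ and the two active vertices antipodal one gets configuration $\mathcal{A}$, which belongs to both the $\mathcal{ABC}$ and $\mathcal{ADE}$ lists; this forces each cycle to be both $\mathcal{ABC}$-positive and $\mathcal{ADE}$-positive regardless of the smallest biactive branch-resolving set chosen. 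I must check that with $b(C_i)=1$ the forced position of the second $S$-active vertex is indeed antipodal (it is determined by the attachment geometry), so that $\mathcal{A}$ genuinely cannot be avoided.

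Second I would realize the prescribed value $L(G)=b$. Recall $L(G)=\sum_{\ell(v)>1}(\ell(v)-1)$ counts the ``excess'' threads at high-degree vertices. To inject exactly $b$ into $L(G)$ without disturbing the cycle conditions, I would pick one vertex of the backbone (not on any cycle, or a degree-$\geq 3$ vertex created for this purpose) and hang $b+1$ threads at it, contributing exactly $b$ to $L(G)$; for $b=0$ I simply hang nothing extra. I must verify this added tree part does not create additional branch-active vertices on the cycles and does not change any $b(C_i)$ from $1$, which holds as long as the threads are attached off the cycles.

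**Main obstacle.**
The delicate point is ensuring \emph{positivity for every smallest biactive branch-resolving set simultaneously}, i.e.\ that no clever choice of $S$ lets some cycle escape all of $\mathcal{A},\mathcal{B},\mathcal{C}$ (or $\mathcal{A},\mathcal{D},\mathcal{E}$). With $b(C_i)=1$ there is still one free $S$-active vertex whose position can vary over the region of $C_i$, so I must confirm that for \emph{all} admissible placements the even cycle with two $S$-active vertices lands in configuration $\mathcal{A}$ (antipodal active vertices), and that no placement yields a geodesic triple. Using a $4$-cycle makes this transparent: with only two active vertices and the attachment vertex fixed, the second active vertex is forced to the antipodal position, so $\mathcal{A}$ is unavoidable and the cycle is positive for both variants. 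Once every cycle is both $\mathcal{ABC}$- and $\mathcal{ADE}$-positive with a single branch-active vertex, the two preceding corollaries give $\mathrm{dim}(G)=\mathrm{edim}(G)=L(G)+2c=b+2c$ directly, completing the proof.
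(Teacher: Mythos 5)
There is a genuine gap, and it sits exactly at the point you flagged as the ``main obstacle.'' The claim that on a pendant $4$-cycle ``with only two active vertices and the attachment vertex fixed, the second active vertex is forced to the antipodal position'' is false. Being $S$-active is a property relative to the chosen set $S$, not of the graph: on a bare $4$-cycle $v_0v_1v_2v_3$ attached to the backbone at $v_0$, the trees $T_{v_1},T_{v_2},T_{v_3}$ are single vertices, so the second $S$-active vertex is precisely whichever cycle vertex one puts into $S$ --- and one may put in $v_1$, adjacent to $v_0$. With that choice the canonical labelling has $k=1\neq g/2$, so $\mathcal{A}$ fails, and every one of $\mathcal{B},\mathcal{C},\mathcal{D},\mathcal{E}$ requires an $S$-free thread hanging at some cycle vertex, of which your construction has none (no leaves are attached to the cycles; the path toward the backbone is not a thread since it ends in vertices of degree $\geq 3$). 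Hence every cycle is both $\mathcal{ABC}$-negative and $\mathcal{ADE}$-negative, and since your pendant cycles are vertex-disjoint there are no critically incident pairs either; by the characterization theorem this set of size $L(G)+B(G)=b+c$ is already a vertex and an edge metric generator, so your graph has $\mathrm{dim}(G)=\mathrm{edim}(G)=b+c$, not $b+2c$. (If instead you let all $4$-cycles share one backbone vertex, a similar analysis gives $b+2c-1$; either way the construction misses the target.)

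The missing ingredient, which is what the paper's construction supplies, is a pendant leaf at the attachment vertex of each cycle: the paper takes $c$ copies of a $6$-cycle with a leaf hanging at a vertex $v_i$, and joins each $v_i$ by an edge to the center $u$ of a star with $b+1$ leaves. That leaf can never belong to a smallest biactive branch-resolving set (putting it in $S$ would not create a second active vertex on the cycle), so it is always an $S$-free thread hanging at $v_0$ or $v_k$ of the canonical labelling. Consequently \emph{every} admissible placement of the second active vertex triggers a configuration: antipodal gives $\mathcal{A}$, non-antipodal gives both $\mathcal{B}$ and $\mathcal{D}$, so each cycle is genuinely $\mathcal{ABC}$- and $\mathcal{ADE}$-positive. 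Your overall strategy (force $b(C_i)=1$ and positivity of every cycle, inject $L(G)=b$ via $b+1$ threads at a backbone vertex, then quote the two corollaries) is sound and matches the paper's; it is only the concrete gadget that fails, and adding a leaf at each attachment vertex repairs it.
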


\begin{proof}
For a given pair of integers $b\geq0$ and $c\geq2,$ we construct a cactus
graph $G$ in a following way. Let $G_{0}$ be a graph on $b+2$ vertices, with
one vertex $u$ of degree $b+1$ and all other vertices of degree $1$, i.e.
$G_{0}$ is a star graph. Let $H$ be a graph obtained from the $6$-cycle by
introducing a leaf to it and let $G_{1},\ldots,G_{c}$ be $c$ vertex disjoint
copies of $H$. Denote by $v_{i}$ the only vertex of degree $3$ in $G_{i}.$ Let
$G$ be a graph obtained from $G_{0},G_{1},\ldots,G_{c}$ by connecting them
with an edge $uv_{i}$ for $i=1,\ldots,c$. Obviously, $G$ is a cactus graph
with $c$ cycles and $L(G)=b$. On each of the cycles in $G$ the vertex $v_{i}$
is the only branch-active vertex. If $S\subseteq V(G)$ is a smallest
branch-resolving set in $G$ such that there is a cycle $C_{i}$ in $G$ with
only two $S$-active vertices, then because of the leaf pending on $v_{i}$ the
cycle $C_{i}$ contains either configuration $\mathcal{A}$ if the pair of
$S$-active vertices on $C_{i}$ is an antipodal pair or both configuration
$\mathcal{B}$ and $\mathcal{D}.$ Either way, $S$ is not a vertex nor an edge
metric generator.

On the other hand, the set $S$ consisting of $b$ leaves hanging at $u$ in
$G_{0}$ and a pair of vertices from each $6$-cycle which form a geodesic
triple with $v_{i}$ on the cycle is both a vertex and an edge metric generator
in $G.$ Since $\left\vert S\right\vert =b+2c=L(G)+2c,$ the claims hold.
\end{proof}

\section{An application to zero forcing number}

The results from previous section enable us to solve for cactus graphs a
conjecture posed in literature \cite{Eroh} which involves the vertex metric
dimension, the zero forcing number and the cyclomatic number $c(G)=\left\vert
E(G)\right\vert -\left\vert V(G)\right\vert +1$ (which is sometimes called the
cycle rank number and denoted by $r(G)$) of a graph $G$. Notice that in a
cactus graph $G$ the cyclomatic number $c(G)$ equals the number of cycles in
$G.$ Let us first define the zero forcing number of a graph.

Assuming that every vertex of a graph $G$ is assigned one of two colors, say
black and white, the set of vertices which are initially black is denoted by
$S.$ If there is a black vertex with only one white neighbor, then the
\emph{color-change rule} converts the only white neighbor also to black. This
is one iteration of color-change rule, it can be applied iteratively. A
\emph{zero forcing set} is any set $S\subseteq V(G)$ such that all vertices of
$G$ are colored black after applying the color-change rule finitely many
times. The cardinality of the smallest zero forcing set in a graph $G$ is
called the \emph{zero forcing number} of $G$ and it is denoted by $Z(G).$ In
\cite{Eroh} it was proven that for a unicyclic graph $G$ it holds that
$\mathrm{dim}(G)\leq Z(G)+1$, and it was further conjectured the following.

\begin{conjecture}
\label{Con_zero}For any graph $G$ it holds that $\mathrm{dim}(G)\leq
Z(G)+c(G).$
\end{conjecture}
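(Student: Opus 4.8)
The plan is to prove the conjecture by induction on the cyclomatic number $c(G)$, peeling off one independent cycle at a time. For the base case $c(G)=0$ the graph $G$ is a tree, and the inequality reduces to $\mathrm{dim}(G)\le Z(G)$, which is the $c(G)=0$ instance and is known for trees \cite{Eroh}. So the whole difficulty lies in the inductive step.

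For the inductive step I would fix a graph $G$ with $c(G)=k\ge 1$, choose an edge $e$ lying on some cycle of $G$, and set $G'=G-e$. Since $e$ lies on a cycle, $G'$ is still connected and $c(G')=k-1$, so by the induction hypothesis $\mathrm{dim}(G')\le Z(G')+(k-1)$. To close the induction it suffices to control the two parameters simultaneously under reinsertion of the single edge $e$, namely to establish
\[
\mathrm{dim}(G)\le \mathrm{dim}(G')+1 \qquad\text{and}\qquad Z(G')\le Z(G),
\]
for then $\mathrm{dim}(G)\le \mathrm{dim}(G')+1\le Z(G')+k\le Z(G)+k$, as required. Any other split of the unit of slack between these two auxiliary inequalities would serve equally well.

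The main obstacle is that neither of these auxiliary inequalities holds for an arbitrary cycle edge $e$: both $\mathrm{dim}$ and $Z$ are badly behaved under single edge deletions, and each can change by more than one, in either direction, depending on the choice of $e$. Consequently the heart of the argument must be a careful selection of $e$ so that $G'$ sits inside $G$ in a controlled way, ideally so that a minimum resolving set of $G'$ can be repaired into a resolving set of $G$ by adding at most one landmark on the reinstated cycle (mirroring the geodesic-triple correction used on each cycle in the cactus case), while a minimum zero forcing set of $G$ restricts to a zero forcing set of $G'$ of no larger size. For cacti this is exactly what the decomposition into isometric unicyclic regions achieves, because the cycles are edge disjoint; the difficulty in the general case is that cycles may share edges, so there is no analogue of the regions $G_{i}$ and no guarantee that deleting $e$ leaves an isometric subgraph on which distances, and hence resolvability, are preserved.

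A complementary line of attack, which I would pursue in parallel in case the edge-deletion bookkeeping proves intractable, is to build a resolving set directly from the forcing structure. A minimum zero forcing set $B$ with $\left\vert B\right\vert = Z(G)$ induces a chronological list of forces that partitions $V(G)$ into $Z(G)$ forcing chains, each an induced path starting at a vertex of $B$. The hope is that the $Z(G)$ chain origins, together with one extra landmark per independent cycle (one per chord of a fixed spanning tree, giving $c(G)$ additional landmarks), resolve all pairs of vertices: within a single chain the path distances discriminate, and the $c(G)$ chord-landmarks account for the distance distortions caused by the non-tree edges. Making this precise, in particular showing that $G$-distances rather than in-chain distances still separate two vertices lying on the same forcing chain once the chord corrections are added, is the crux, and is where I expect the real work, and the real risk, to lie.
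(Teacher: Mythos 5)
The statement you are trying to prove is stated in the paper as a \emph{conjecture} (posed in \cite{Eroh}); the paper does not prove it and, to date, it is open. What the paper actually proves is only the cactus case, and it does so without any induction: Theorem \ref{Tm_dim} gives the exact formula $\mathrm{dim}(G)=L(G)+B(G)+c_{\mathcal{ABC}}(G)+\tau(G_{vi})$, Corollary \ref{Cor_boundB} extracts $\mathrm{dim}(G)\leq L(G)+B(G)+c$, and a short direct argument shows $L(G)+B(G)\leq Z(G)$ (every zero forcing set must be branch-resolving, and must contain a vertex on every cycle with at most one branch-active vertex). So your proposal is not comparable to a ``paper proof'' of the general statement, because none exists; it can only be judged as a proof attempt of an open problem, and as such it has genuine gaps that you yourself half-acknowledge but do not close.

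Concretely, the inductive step is where the argument fails. Both auxiliary inequalities are false for an arbitrary cycle edge $e$: the edge spread of zero forcing lies in $\{-1,0,1\}$ and the case $Z(G-e)=Z(G)+1$ does occur, so $Z(G')\leq Z(G)$ cannot be assumed; worse, it is known (Eroh, Feit, Kang, Yi, on the effect of edge deletion on metric dimension) that $\mathrm{dim}(G)-\mathrm{dim}(G-e)$ can be made \emph{arbitrarily} large, so reinstating a single edge can increase $\mathrm{dim}$ by far more than the one unit of slack you have --- no re-splitting of that unit between the two inequalities can absorb an unbounded jump. Your fix is ``a careful selection of $e$,'' but you give no selection rule and no proof that one exists; since producing such a rule for every graph would essentially \emph{be} the conjecture, this is not a repairable bookkeeping detail but the entire missing content. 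The same holds for your fallback via forcing chains: forcing chains are induced paths that need not be geodesic in $G$, two vertices on different chains can be equidistant from all $Z(G)$ chain origins, and there is no argument (nor any natural mechanism) by which one landmark per chord of a spanning tree corrects this, because the chain partition bears no controlled relation to the spanning tree whose chords you count. Note finally that even restricted to cacti your peeling strategy would struggle: deleting a cycle edge destroys exactly the isometric unicyclic-region structure on which the paper's analysis rests, which is why the paper proves the cactus case through the exact dimension formula rather than by induction on $c(G)$.
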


Moreover, they proved for cacti with even cycles the bound $\mathrm{dim}%
(G)\leq Z(G)+c(G).$ We will use our results to prove that for cacti the
tighter bound from the above conjecture holds.

\begin{proposition}
Let $G$ be a cactus graph. Then $\mathrm{dim}(G)\leq Z(G)+c(G)$ and
$\mathrm{edim}(G)\leq Z(G)+c(G).$
\end{proposition}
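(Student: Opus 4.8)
The plan is to bound $\mathrm{dim}(G)$ and $\mathrm{edim}(G)$ from above using Theorem \ref{Tm_dim}, and to bound $Z(G)$ from below, then to combine the two. Recall that for a cactus graph $c(G)$ equals the number of cycles $c$. From Theorem \ref{Tm_dim} and the reasoning leading to Corollary \ref{Cor_boundB}, we have
\[
\mathrm{dim}(G)=L(G)+B(G)+c_{\mathcal{ABC}}(G)+\tau(G_{vi})\leq L(G)+B(G)+c,
\]
and the identical bound for $\mathrm{edim}(G)$ with the edge-version quantities. Since $c=c(G)$, it therefore suffices to establish that
\[
L(G)+B(G)\leq Z(G),
\]
because then adding $c(G)$ to both sides gives the desired inequalities for both dimensions simultaneously.

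\medskip\noindent First I would recall the structure of a smallest biactive branch-resolving set: its size is exactly $L(G)+B(G)$, where $L(G)=\sum_{v:\ell(v)>1}(\ell(v)-1)$ accounts for threads and $B(G)=\sum_{i}\max\{0,2-b(C_i)\}$ accounts for cycles with too few branch-active vertices. The goal is thus to produce a zero forcing set of size at least $L(G)+B(G)$, or equivalently to show that no zero forcing set can be smaller than this quantity. The natural approach is to exhibit, for any zero forcing set $S'$, enough ``forced'' initial black vertices. Concretely, at each vertex $v$ of degree $\geq 3$ with $\ell(v)$ hanging threads, at most one of those threads can be left entirely white initially and still get colored by the forcing process, since a thread is a path whose coloring can only propagate inward from a black vertex on it or from its attachment vertex $v$; if two threads at $v$ were initially all-white, neither $v$ nor any thread-vertex could ever force along them. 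This forces at least $\ell(v)-1$ initial black vertices among the threads at each such $v$, contributing $L(G)$ in total. Similarly, on each cycle $C_i$ the forcing process stalls unless the initial black set meets the cyclic part sufficiently: a cycle cannot be fully forced from a single ``entry point,'' so cycles with few branch-active attachment points require extra initial black vertices, contributing the $B(G)$ term.

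\medskip\noindent The cleanest way to organize this is to argue that a smallest biactive branch-resolving set is itself (or can be converted into) a zero forcing set, or dually that any zero forcing set must satisfy the branch-resolving and biactivity constraints that force its size up to $L(G)+B(G)$. I would proceed by the second route: take a smallest zero forcing set $Z$ and show $|Z|\geq L(G)+B(G)$ by the thread-counting and cycle-counting arguments sketched above, treating the threads (the $L(G)$ bound) and the cycles with $b(C_i)<2$ (the $B(G)$ bound) as disjoint contributions, since threads hang at branching vertices while the $B(G)$ deficit is localized on the cyclic parts.

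\medskip\noindent The hard part will be making the cycle contribution rigorous, i.e.\ showing that a cycle $C_i$ with $b(C_i)<2$ genuinely forces $2-b(C_i)$ extra vertices into any zero forcing set, and in particular handling the interaction at shared vertices of cycles so that contributions from different cycles and threads are not double-counted. One must verify that the lower bounds from distinct threads and distinct cyclic deficits are additive, which relies on the edge-disjointness of cycles in a cactus and the fact that each thread is attached at a single branching vertex. Once the combined bound $L(G)+B(G)\leq Z(G)$ is secured, the proposition follows at once by adding $c(G)=c$ and invoking Corollary \ref{Cor_boundB} for each of $\mathrm{dim}(G)$ and $\mathrm{edim}(G)$.
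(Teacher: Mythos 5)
Your reduction is exactly the paper's: by Corollary \ref{Cor_boundB} it suffices to prove $L(G)+B(G)\leq Z(G)$, and your thread argument (two initially all-white threads at a common vertex $v$ block each other, since $v$ then permanently has two white neighbors and a thread with no black vertex can only be forced inward starting from $v$) is precisely the paper's proof that every zero forcing set is branch-resolving. One aside: the first alternative you float, namely showing that a smallest biactive branch-resolving set is itself a zero forcing set, would prove $Z(G)\leq L(G)+B(G)$, which is the wrong direction; fortunately you commit to the second route. The genuine gap is the $B(G)$ term: you assert that ``a cycle cannot be fully forced from a single entry point, so cycles with few branch-active attachment points require extra initial black vertices,'' and then explicitly defer both the proof of this claim and the additivity of the contributions. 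That deferred step is the entire content of the second half of the argument, so as written your proposal does not establish $L(G)+B(G)\leq Z(G)$.

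Here is how the paper closes it. Double counting is handled structurally rather than by a disjointness analysis over threads and cycles: inside a given zero forcing set $S$ one fixes a smallest branch-resolving subset $S_{1}\subseteq S$ (which exists because $S$ is branch-resolving), so $\left\vert S_{1}\right\vert =L(G)$, and then one shows separately that $S_{2}=S\setminus S_{1}$ satisfies $\left\vert S_{2}\right\vert \geq B(G)$. This requires two cases. If some cycle has $b(C_{i})=0$, then $G$ is unicyclic (in a cactus with at least two cycles every cycle has a branch-active vertex), so $L(G)=0$, $B(G)=2$, and any zero forcing set of a graph containing a cycle has at least two vertices, which settles this case. Otherwise every deficient cycle has $b(C_{i})=1$, say with unique branch-active vertex $v$; the key observation is that forcing started from $S_{1}$ alone can color at most $v$ on $C_{i}$: the pendant threads at the other cycle vertices contain no $S_{1}$ vertices and can only be forced from the cycle inward, while $v$, once black, still has two white neighbors on $C_{i}$ and hence can never force along the cycle. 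Therefore $S_{2}$ must contain a vertex from the region of each such cycle outside $T_{v}$, and these contributions are distinct for distinct deficient cycles because two cycles of a cactus share at most one vertex and any shared vertex is branch-active on both, hence excluded from both regions' counted parts. This yields $\left\vert S_{2}\right\vert \geq B(G)$, hence $\left\vert S\right\vert \geq L(G)+B(G)$, which is the step your sketch leaves open.
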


\begin{proof}
Due to Corollary \ref{Cor_boundB} it is sufficient to prove that
$L(G)+B(G)\leq Z(G).$ Let $S\subseteq V(G)$ be a zero forcing set in $G.$ Let
us first show that $S$ must be a branch-resolving set. Assume the contrary,
i.e. that $S$ is not a branch-resolving set and let $v\in V(G)$ be a vertex of
degree $\geq3$ with at least two $S$-free threads hanging at $v.$ But then $v$
has at least two white neighbors, one on each of the $S$-free threads hanging
at it, which cannot be colored black by $S,$ so $S$ is not a zero forcing set,
a contradiction.

Let $S_{1}\subseteq S$ be a smallest branch-resolving set contained in $S$ and
let $S_{2}=S\backslash S_{1}.$ Obviously, $\left\vert S_{1}\right\vert =L(G)$
and $S_{1}\cap S_{2}=\phi.$ We now wish to prove that $\left\vert
S_{2}\right\vert \geq B(G).$ If $G$ is a tree, then the claim obviously holds,
so let us assume that $G$ contains at least one cycle. Let $C_{i}$ be a cycle
in $G$ such that $b(C_{i})\leq1.$ If $b(C_{i})=0,$ then $G$ is a unicyclic
graph and $C_{i}$ the only cycle in $G$. Since $b(C_{i})=0,$ we have
$S_{1}=\phi,$ so $S_{2}=S.$ Since a zero forcing set in unicyclic graph must
contain at least two vertices, we obtain $\left\vert S_{2}\right\vert
=\left\vert S\right\vert \geq2=B(G)$ and the claim is proven.

Assume now that for every cycle $C_{i}$ with $b(C_{i})\leq1$ it holds that
$b(C_{i})=1.$ Let $v$ be the branch-active vertex on such a cycle $C_{i}$ and
notice that $S_{1}$ can turn only $v$ black on $C_{i}.$ Therefore, in order
for $S$ to be a zero forcing set it follows that $S_{2}$ must contain a vertex
from every such cycle, i.e. $\left\vert S_{2}\right\vert \geq B(G).$
Therefore, $\left\vert S\right\vert =\left\vert S_{1}\right\vert +\left\vert
S_{2}\right\vert \geq L(G)+B(G).$
\end{proof}

The above proposition, besides proving for cacti the cycle rank conjecture
which was posed for $\mathrm{dim}(G),$ also gives a similar result for
$\mathrm{edim}(G).$ So, this motivates us to pose for $\mathrm{edim}(G)$ the
counterpart conjecture of Conjecture \ref{Con_zero}.

\section{Concluding remarks}

In \cite{SedSkreBounds} it was established that for a unicyclic graph $G$ both
vertex and edge metric dimensions are equal to $L(G)+\max\{2-b(G),0\}$ or
$L(G)+\max\{2-b(G),0\}+1.$ In \cite{SedSkreUnicyclic} a characterization under
which both of the dimensions take one of the two possible values was further
established. In this paper we extend the result to cactus graphs where a
similar characterization must hold for every cycle in a graph, and also the
additional characterization for the connection of two cycles must be
introduced. This result enabled us to prove the cycle rank conjecture for
cactus graphs.

Moreover, the results of this paper enabled us to establish a simple upper
bound on the value of the vertex and the edge metric dimension of a cactus
graph $G$ with $c$ cycles%
\[
\mathrm{dim}(G)\leq L(G)+2c\quad\hbox{ and }\quad\mathrm{edim}(G)\leq
L(G)+2c.
\]
Since the number of cycles can be generalized to all graphs as the cyclomatic
number $c(G)=\left\vert E(G)\right\vert -\left\vert V(G)\right\vert +1,$ we
conjecture that the analogous bounds hold in general.

\begin{conjecture}
Let $G$ be a connected graph. Then, $\mathrm{dim}(G)\leq L(G)+2c(G).$
\end{conjecture}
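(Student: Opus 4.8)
The plan is to reduce the conjecture to its essential case, that of $2$-connected graphs, and then reassemble the bound over the block structure of $G$ using the same cut-vertex machinery that drives the cactus proof. First I would separate the two terms according to where they live. A thread can only hang at a vertex of degree $\ge 3$ and must terminate in a leaf, so every edge of a thread is a bridge and lies outside all cyclic blocks; hence $L(G)$ is supported entirely on the tree-like part of $G$. Dually, the cyclomatic number splits over blocks, $c(G)=\sum_B c(B)$ with $B$ ranging over the blocks of $G$ and each bridge contributing $0$. Thus $L(G)+2c(G)$ decomposes as a branching term plus $\sum_B 2c(B)$, and it is natural to build a resolving set as the union of a branch-resolving set of size $L(G)$ on the tree part together with a local resolving set of size at most $2c(B)$ inside each nontrivial block.

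The key reduction is therefore the $2$-connected case: for a $2$-connected graph $B$ one has minimum degree $\ge 2$, hence no leaves and no threads, so $L(B)=0$ and the conjecture specializes to $\mathrm{dim}(B)\le 2c(B)$. I would try to prove this by fixing a spanning tree of $B$, whose $c(B)$ co-tree edges index a basis of fundamental cycles, and then showing that two suitably chosen landmarks per fundamental cycle suffice to resolve every pair. The cactus argument supplies the template, namely that a biactive branch-resolving set carrying a geodesic triple on every cycle is a generator (the result cited from \cite{SedSkreBounds}); what is needed is an analogue of such a geodesic certificate that remains valid when fundamental cycles share edges and vertices.

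To assemble the global set I would generalize the unicyclic-region apparatus of Section~3 to a block-region apparatus: each block together with the pendant trees gravitating to it plays the role of a unicyclic region, and cut vertices play the role of boundary vertices. The proofs of Lemmas~\ref{Lemma_regionGenerator} and~\ref{Lemma_sufficient} use only that a region is an isometric subgraph and that every shortest path leaving a region passes through a boundary vertex, both of which persist for blocks since cut vertices separate $G$. Hence a set that resolves every block-region and is biactive on every block should resolve all of $G$, and the count yields exactly $L(G)+2c(G)$, with the slack from using $2c(B)$ rather than the exact per-block value absorbing any critical-incidence corrections between blocks that share a cut vertex.

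The main obstacle is the $2$-connected bound $\mathrm{dim}(B)\le 2c(B)$ itself. Unlike in a cactus, the fundamental cycles of a $2$-connected block overlap, so there is no clean notion of ``two $S$-active vertices per cycle'' and no immediate geodesic-triple certificate; a pair of landmarks chosen for one fundamental cycle may fail to resolve pairs created by shortcuts through a neighbouring cycle. Controlling these interactions, i.e. proving that two landmarks per element of a cycle basis always suffice regardless of how the cycles intersect, is exactly the content the conjecture leaves open, and I would expect the dependence on the chosen spanning tree or ear decomposition to be the delicate point on which a general proof stands or falls.
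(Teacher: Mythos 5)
First, be aware that the statement you were asked to prove is not a theorem of this paper but its concluding \emph{conjecture}: the paper establishes it only for cacti (via Theorem \ref{Tm_dim} and the two corollaries that require $c\geq2$) and observes that the $3$-connected case follows from the bound $\mathrm{mdim}(G)<2c(G)$ of \cite{SedSkrekMixed}. So there is no proof in the paper to compare against, and your attempt must stand on its own. It does not: your plan reduces the conjecture to the bound $\mathrm{dim}(B)\leq 2c(B)$ for $2$-connected graphs $B$, and then, as you say yourself, leaves that bound open. But for $2$-connected graphs that bound \emph{is} the conjecture (there $L(B)=0$), so the reduction ends exactly where it began; a plan whose key step is ``this is what the conjecture leaves open'' is an honest research outline, not a proof.

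Second, and more seriously, the assembly step that you treat as routine is false, and the paper contains the warning sign. You claim that a branch-resolving set of size $L(G)$ on the tree part, together with at most $2c(B)$ landmarks inside each cyclic block, resolves $G$, so that ``the count yields exactly $L(G)+2c(G)$.'' Consider the unicyclic graph $G$ obtained from $C_{6}=v_{0}v_{1}\cdots v_{5}$ by attaching a pendant path of length two at every $v_{i}$. Here $L(G)=0$ (each $v_{i}$ carries a single thread), $c(G)=1$, and the unique cyclic block satisfies $\mathrm{dim}(C_{6})=2\leq 2c(B)$; yet $\mathrm{dim}(G)=3$. Indeed, any generator is a biactive branch-resolving set by Lemma \ref{Lemma_biactive_branchResolving}, and whichever pair of $S$-active vertices a two-element set produces (at distance $1$, $2$ or $3$ on the cycle), the cycle contains configuration $\mathcal{B}$, $\mathcal{C}$ or $\mathcal{A}$ respectively, so by Theorem \ref{Lemma_configurations} no two-element set is a generator; concretely, $S=\{v_{0},v_{1}\}$ fails to separate $v_{3}$ from the neighbor of $v_{2}$ on its thread, both at distances $(3,2)$. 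This is precisely why the exact formula of Theorem \ref{Tm_dim} carries the correction terms $B(G)$, $c_{\mathcal{ABC}}(G)$ and $\tau(G_{vi})$, why the $L(G)+2c$ corollaries require $c\geq2$, and why the paper explicitly notes after them that the bound can fail for $c=1$. Your argument never invokes $c(G)\geq2$ or any hypothesis excluding this example, so if it were sound it would prove the bound for this $G$, where the bound is false. The missing content is exactly the interaction, through cut vertices, between a block and the trees and other blocks attached to it; your single sentence about slack ``absorbing any critical-incidence corrections'' is where that accounting would have to live, and it is not there.
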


\begin{conjecture}
Let $G$ be a connected graph. Then, $\mathrm{edim}(G)\leq L(G)+2c(G).$
\end{conjecture}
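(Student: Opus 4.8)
The plan is to prove the bound by induction on the cyclomatic number $c(G)$, treating the edge-disjoint (cactus) case established here as the model for the inductive step. The base case $c(G)=0$ is immediate: then $G$ is a tree and $\mathrm{edim}(G)=L(G)=L(G)+2c(G)$ by the formula for trees cited in the Preliminaries. For the inductive step I would fix a connected graph $G$ with $c(G)=c\geq 1$, choose an edge $e=xy$ lying on some cycle, so that $G'=G-e$ is again connected with $c(G')=c-1$, and try to build an edge metric generator of $G$ of size at most $L(G)+2c$ from one supplied by the induction hypothesis for $G'$.

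The first technical step is to control how the parameters change under deletion of $e$. Removing $e$ can only increase distances and can lower the degrees of $x$ and $y$, so threads may appear or merge; hence $L(G')$ need not equal $L(G)$, and I would need an auxiliary lemma bounding $L(G)$ against $L(G')$ while tracking the branch structure at $x$ and $y$. The second step is the heart of the argument: starting from an edge metric generator $S'$ of $G'$ with $|S'|\leq L(G')+2(c-1)$, I would add at most two vertices, placed on the cycle created by reinserting $e$ so as to form a geodesic triple of $S$-active vertices in the spirit of the cactus construction, and argue that the augmented set distinguishes both the restored edge $e$ from all others and every pair of edges whose relative distances were altered by restoring $e$. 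In the cactus setting this is exactly the bookkeeping carried by the terms $B(G)+c_{\mathcal{ADE}}(G)+\tau(G_{ei})\leq 2c$ underlying Theorem \ref{Tm_dim} and Corollary \ref{Cor_boundB}, and the sufficiency reasoning of Lemma \ref{Lemma_sufficient} (that shortest paths from a given edge to all landmarks are forced through a single boundary or critical vertex) is the template I would imitate.

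I expect the main obstacle to be precisely the point at which the cactus decomposition fails. When cycles are not edge disjoint there is no unicyclic-region structure, so for a pair of edges the geodesics to the landmark set need no longer funnel through one boundary or critical vertex; consequently the clean local analysis by configurations $\mathcal{A}$, $\mathcal{D}$, $\mathcal{E}$ on each cycle is unavailable, and two overlapping cycles can produce an undistinguished edge pair that neither cycle resolves on its own. Proving that two new landmarks per independent cycle still suffice in the presence of shared edges is the crux. A plausible alternative to the edge-deletion induction is to use an \emph{ear decomposition} of $G$ and resolve the edges of each ear with two vertices chosen to break all relevant ties simultaneously; but verifying that the choices made for different ears do not interfere — the exact phenomenon that the notions of vertex- and edge-critical incidence were introduced to handle in the cactus case — is where I expect the genuine difficulty to reside, and it is presumably why the statement remains a conjecture rather than a theorem.
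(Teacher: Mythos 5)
First, a point of calibration: the statement you were asked to prove is not a theorem of this paper but one of its concluding conjectures. The paper establishes the bound $\mathrm{edim}(G)\leq L(G)+2c$ only for cactus graphs (as a corollary of the exact formula in Theorem \ref{Tm_dim}), and remarks that for $3$-connected graphs it follows from the known bound $\mathrm{mdim}(G)<2c(G)$; the general statement is left open. So there is no proof in the paper to compare yours against, and the proposal must stand on its own. It does not: it is a research plan whose decisive step is missing, as you yourself concede in your final paragraph. Observing that the crux is to show that two new landmarks per independent cycle suffice when cycles share edges is an accurate diagnosis, but it is a restatement of the conjecture, not an argument for it.

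Beyond the admitted gap, the induction scheme is already broken at the bookkeeping level. If the inductive hypothesis yields an edge metric generator $S'$ of $G'=G-e$ with $|S'|\leq L(G')+2(c-1)$ and you add $t$ vertices, you get $\mathrm{edim}(G)\leq L(G')+2(c-1)+t$, which is at most the target $L(G)+2c$ only if $t\leq L(G)-L(G')+2$. But $L(G')$ can exceed $L(G)$ by exactly $2$: take a vertex $u$ with three pendant paths of length five and let $e$ join the far endpoints of two of them. In $G$ the vertex $u$ carries a single thread, so $L(G)=0$, while in $G-e$ it carries three, so $L(G-e)=2$. Then $t\leq 0$: the budget is exhausted before you may add a single vertex for the new cycle, so the plan of placing two vertices to form a geodesic triple cannot even begin; you would instead have to show that $S'$ itself, or a modification of the same size, works in $G$, which is a different and much harder claim. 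There is also a structural defect: reinserting $e$ shortens distances globally, so $S'$ may cease to distinguish pairs of edges arbitrarily far from the new cycle. In the cactus case this phenomenon is controlled precisely because unicyclic regions are isometric subgraphs of $G$, which is what drives Lemmas \ref{Lemma_regionGenerator} and \ref{Lemma_sufficient}, and that isometry is exactly what disappears once cycles share edges; the same two objections apply to your ear-decomposition variant. Finally, a minor flaw you inherit from the literature's phrasing: for a path, $L=c=0$ but $\mathrm{edim}=1$, so the base case (and indeed the conjecture as literally stated) needs paths excluded. In short, the base case is essentially fine, but the inductive step is both arithmetically and structurally unsound, and the statement remains what the paper says it is: a conjecture.
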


In \cite{SedSkrekMixed} it was shown that the inequality $\mathrm{mdim}%
(G)<2c(G)$ holds for $3$-connected graphs. Since $\mathrm{dim}(G)\leq
\mathrm{mdim}(G)$ and $\mathrm{edim}(G)\leq\mathrm{mdim}(G),$ the previous two
conjectures obviously hold for $3$-connected graphs.

Also, motivated by the bound on edge metric dimension of cacti involving zero
forcing number, we state the following conjecture for general graphs, as a
counterpart of Conjecture \ref{Con_zero}.

\begin{conjecture}
Let $G$ be a connected graph. Then, $\mathrm{edim}(G)\leq Z(G)+c(G).$
\end{conjecture}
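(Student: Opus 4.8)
The plan is to imitate the cactus argument: beginning from a smallest zero forcing set $S$, I would show that $S$ can be enlarged into an edge metric generator by adjoining at most $c(G)$ further vertices, so that $\mathrm{edim}(G)\le |S|+c(G)=Z(G)+c(G)$. The first step carries over verbatim from the preceding proposition on cacti: any zero forcing set must be branch-resolving, since a vertex of degree $\ge 3$ carrying two $S$-free threads would retain two white neighbors forever and stall the color-change rule. This secures the $L(G)$-type contribution and already distinguishes every pair of edges lying in the acyclic periphery of $G$.

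The second step is to account for the cyclic part. For cacti this was handled by the quantity $B(G)$ together with the per-cycle configuration test of Theorem \ref{Lemma_configurations} and the bound $\mathrm{edim}(G)\le L(G)+B(G)+c$ of Corollary \ref{Cor_boundB}. In a general graph there is no edge-disjoint cycle decomposition and no unicyclic-region structure, so instead I would fix a spanning tree $T$ and work with the $c(G)$ non-tree edges, each of which closes one independent cycle. The aim is to prove that adding a single carefully chosen vertex per non-tree edge --- one that, together with the forcing chains emanating from $S$, plays the role of completing a geodesic triple on the cycle that edge creates --- suffices to resolve every pair of edges that would otherwise stay confused.

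The main obstacle, and the reason this remains only a conjecture, is exactly that in a general graph the cycles are not edge-disjoint, so the clean ``one configuration, one cycle'' bookkeeping of the cactus case breaks down: a single undistinguished pair of edges may be confusable because of several overlapping cycles at once, and repairing one cycle need not repair another. The crux would therefore be a counting lemma asserting that the extra vertices genuinely needed beyond a zero forcing set never exceed $c(G)$, notwithstanding this overlap. I would attempt this by induction on $c(G)$, deleting one non-tree edge at a time and tracking how $Z(G)$ and $\mathrm{edim}(G)$ respond; an alternative is to route through $\mathrm{edim}(G)\le\mathrm{mdim}(G)$ and bound $\mathrm{mdim}$ against $Z$, but the inductive approach looks more faithful to the cactus method. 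The delicate point in either route is controlling how a single edge deletion can simultaneously lower the forcing number and create fresh undistinguished edge pairs.
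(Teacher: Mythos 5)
This statement is posed in the paper as an open conjecture: the paper proves the inequality $\mathrm{edim}(G)\leq Z(G)+c(G)$ only for cactus graphs (the proposition of Section 5) and explicitly leaves the general case unresolved, so there is no proof of the statement in the paper to compare yours against. Your proposal does not close that gap either; it is an outline whose decisive step is missing, as you yourself acknowledge. Concretely, your first step (every zero forcing set is branch-resolving, hence $Z(G)\geq L(G)$) does carry over verbatim from the cactus proposition. But everything after that invokes machinery that is defined, and valid, only for graphs with pairwise edge-disjoint cycles: the notions of $S$-active vertices, geodesic triples, configurations $\mathcal{A}$ through $\mathcal{E}$, Theorem \ref{Lemma_configurations} and Corollary \ref{Cor_boundB} all presuppose that each cycle meets the rest of the graph in a controlled way (unicyclic regions, isometric embedding, cycles sharing at most a vertex). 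A cycle closed by a non-tree edge of a spanning tree in a general graph can have chords and can share whole paths with other such cycles, so distances along it need not be graph distances, and ``completing a geodesic triple'' on it carries no resolving guarantee at all. The counting lemma you name as the crux --- that at most $c(G)$ extra vertices beyond a zero forcing set ever suffice --- is essentially the conjecture itself, so deferring it is circular rather than a reduction.

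Neither of your two suggested routes repairs this. The induction on $c(G)$ by deleting a non-tree edge founders on the point you half-identify: deleting an edge changes the metric globally, so pairs of edges distinguished in $G-e$ may become indistinguishable in $G$ (and vice versa), and the zero forcing number is not monotone under edge deletion, so $Z(G-e)$ cannot be compared with $Z(G)$ in the direction the induction needs. Routing through $\mathrm{mdim}$ also fails: the only relevant bound cited in the paper, $\mathrm{mdim}(G)<2c(G)$, holds for $3$-connected graphs, involves no zero forcing term, and cannot cover graphs with threads, where $\mathrm{dim}$, $\mathrm{edim}$ and $Z$ all acquire an $L(G)$-type contribution that $2c(G)$ ignores. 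In short, your outline correctly locates the difficulty (overlapping cycles destroy the one-vertex-per-cycle bookkeeping), but it is a research plan, not a proof, and the statement remains open.
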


\bigskip

\bigskip\noindent\textbf{Acknowledgments.}~~Both authors acknowledge partial
support of the Slovenian research agency ARRS program\ P1-0383 and ARRS
projects J1-1692 and J1-8130. The first author also the support of Project
KK.01.1.1.02.0027, a project co-financed by the Croatian Government and the
European Union through the European Regional Development Fund - the
Competitiveness and Cohesion Operational Programme.

\end{document}